\documentclass[12pt]{amsart}
\usepackage{amsmath,amssymb,txfonts}
\usepackage{amssymb}
\usepackage{amsxtra}
\usepackage{amsthm, color}
\usepackage{txfonts}
\usepackage{graphicx}
\usepackage{times}

\usepackage{hyperref}
\allowdisplaybreaks
\usepackage{color}
\usepackage{xcolor}
\usepackage{pgf,tikz,pgfplots}
\usepackage{mathrsfs}
\usetikzlibrary{arrows}

\hypersetup{
        colorlinks   = true,
        citecolor    = blue,
        linkcolor    = blue,
        urlcolor     = blue
}

\makeatletter
\@namedef{subjclassname@1991}{Subject}
\@namedef{subjclassname@2000}{Subject}
\@namedef{subjclassname@2010}{Subject}
\@namedef{subjclassname@2020}{{\rm 2020} Mathematics Subject Classification}
\makeatother

\def\rr{{\mathbb R}}
\def\rn{{{\rr}^n}}

\def\nn{{\mathbb N}}
\def\zz{{\mathbb Z}}

\def\ls{\lesssim}
\def\gs{\gtrsim}

\def\r{\right}
\def\lf{\left}

\def\XXint#1#2#3{{\setbox0=\hbox{$#1{#2#3}{\int}$ }
\vcenter{\hbox{$#2#3$ }}\kern-.6\wd0}}

      \newcommand{\lt}[1]{[ {#1}] \lower.3ex\hbox{$_{t}$}}

\date{\today}

\newtheorem{theorem}{Theorem}[section]
\newtheorem{definition}[theorem]{Definition}

\newtheorem{remark}[theorem]{Remark}

\newtheorem{proposition}[theorem]{Proposition}
\newtheorem{lemma}[theorem]{Lemma}

\begin{document}
\title[Boundedness of commutator]
 {Boundedness of commutator generated by fractional integral operator and Orlicz-BMO function}
	
\author{Zixing Zhuang}
\address{School of Mathematics and Statistics,
                       Fuzhou University,
                       Fuzhou, Fujian 350108, People's Republic of China}
\email{Zhuangzxmath@126.com}

\author{Chenglong Fang$^\ast$}
\address{School of Mathematics and Statistics,
                       Fuzhou University,
                       Fuzhou, Fujian 350108, People's Republic of China}
\email{clfang@fzu.edu.cn}

\author{Liwen Cao}
\address{School of Mathematics and Statistics,
         Fuzhou University,
         Fuzhou, Fujian 350108, People's Republic of China}
\email{Caolwmath@126.com}

\thanks{Chenglong Fang is supported by the National Natural Science Foundation of China (\# 12501119) and the Fujian Provincial Natural Science Foundation Young Scientist Innovation Project (\# 2025J08035).}

\thanks{$^\ast$ Corresponding author}

	\date{}
		
	\subjclass[2020]{31C40; 42B30; 42B35.}
	
	\keywords{Fractional integral operator; Commutator; Orlicz-Hardy space; Orlicz-$\mathrm{BMO}$ space; Boundedness.}

\begin{abstract}
For $\alpha\in(0, n)$ and a growth function $\varphi:[0,\infty)\rightarrow [0,\infty)$,
it is proved that the commutator $[b,I_\alpha]$ generated by fractional integral operator $I_\alpha$ and Orlicz $\mathrm{BMO}$ function $b$ is bounded from Orlicz-Hardy space $H_{b}^{\varphi}(\mathbb{R}^{n})$ to Lebesgue space $L^{\frac{n}{n-\alpha}}(\mathbb{R}^{n})$,
where $H_{b}^{\varphi}(\mathbb{R}^{n})$ is a suitable Orlicz-Hardy space. Moreover,
the authors also establish that
the boundedness of commutator $[b,I_\alpha]$
from Orlicz-Hardy space $H^{\varphi}(\mathbb{R}^{n})$ to weak Lebesgue space $L^{\frac{n}{n-\alpha},\infty}(\mathbb{R}^{n})$.
\end{abstract}

\maketitle	

%

\section{Introduction}

For $0<\alpha<n$, the fractional integral operator $I_\alpha$ is defined as follows:
 $$I_\alpha (g)(x):=\int_{\mathbb{R}^{n}}\frac{g(z)}{\lf|x-z\r|^{n-\alpha}}\,dz \quad \text{for all}\, x \in \rn.$$
Two classical results (see \cite[Theorem~1.2.3]{GTM250} and \cite[Theorem~4]{A11}) asserted that there exists constants $C$ such that
\begin{align}\label{eq-Ln-L1}
\|I_\alpha g\|_{L^{\frac{n}{n-\alpha}, \, {\infty}}(\mathbb{R}^n)}
\leq C \|g\|_{L^{1}(\rn)}
\end{align}
and
\begin{align*}
\|I_\alpha g\|_{L^{q}(\mathbb{R}^n)}
\leq C \|g\|_{L^{p}(\rn)}
\end{align*}
with $1<p<q<\infty$ and $1/q-1/p=\alpha/n$.
In the framework of Hardy spaces, the boundedness of $I_\alpha$ is considered by Stein and Weiss \cite[Theorem~G]{A13} in their pioneering work on multi-variable harmonic functions, they established that
\begin{align}\label{eq-H-1}
\|I_\alpha g\|_{L^{\frac{n}{n-\alpha}}(\mathbb{R}^n)}
\leq C \|g\|_{H^{1}(\rn)},
\end{align}
which reveals the compatibility between the regularity of fractional integral operator and the atomic structure of Hardy spaces.

Given a locally integrable function $b$ and a sublinear operator $T$,
the commutator is denoted by $[b,T]$, it is given by
$$[b,T]g:= bTg-T(bg),$$
where $g$ is a continuous function whose support is compact.
A famous theorem of Chanillo \cite{C1982} characterizes the $\mathrm{BMO}(\rn)$ space via the boundedness of $[b,I_\alpha]$, that is,
$b\in \mathrm{BMO}(\rn)$ if and only if the commutator $[b,I_\alpha]$ is bounded from $L^{p}(\rn)$ to $L^{q}(\rn)$, where $0<\alpha<n$ and $1<p<\frac{n}{\alpha}$ with
$1/q=1/p-\alpha/n$.
Paluszy\'{n}ski \cite[Theorem~1.1]{Palus1995} proved that $[b,I_\alpha]$ does not map  from $H^1(\rn)$ into $L^{\frac{n}{n-\alpha}}(\rn)$ continuously.
Later,
P\'{e}rez \cite{P1995} introduced a subspace $\mathcal{H}^{1}_{b}(\rn)\subset H^{1}(\rn)$ in the study of commutator of Calder\'on-Zygmund operator.
Prompted by the bilinear splitting technique applied to the pair of Hardy space $H^1(\rn)$ and $\mathrm{BMO}$ space, which is
originally introduced by Bonami and Grellier \cite{Bonami2012},
an important work is that Ky \cite{Ky2013} found a suitable subspace $H_{b}^{1}(\rn)$ of $H^1(\rn)$ and
established that $[b,I_\alpha]$ have boundedness from $H_{b}^{1}(\rn)$ to $L^{\frac{n}{n-\alpha}}(\rn)$.
Moreover, Ky \cite{Ky2013} also proved that the $[b,I_\alpha]$ maps $H^1(\rn)$ continuously into the weak Lebesgue space $L^{\frac{n}{n-\alpha}, \, \infty}(\rn)$.
For more related results,
we refer to the interested reader to \cite{Adams1996,D-L-Z2002, HuyKy2021, A12, Landkof1972, Olsen1995, Stein1970,Stampacchia1958}.

Motivated by the above works, the present paper studies the
 boundedness of commutator generated by Orlicz-$\mathrm{BMO}$ function and fractional integral operator within the framework of Orlicz-type spaces. To formulate our main results precisely, it is necessary to establish some definitions and notation.

A non-decreasing function $\varphi: [0,\infty)\rightarrow [0,\infty)$ is termed Orlicz function if it satisfies: $\varphi(0)=0$, $\varphi(s)>0$ for all $s>0$ and $lim_{s\rightarrow\infty}\varphi(s)= \infty$. For Orlicz function $\varphi$, it is of lower type $p_1$ (with $p_1\in [0,\infty)$) whenever one can find a constant $C_{p_1}>1$ such that
\begin{align*}
\varphi(st)\leq C_{p_1} s^{p_1} \varphi(t) \qquad \forall\, t\geq0,\; s\in(0,1].
\end{align*}
The symbol
\begin{align*}
 i(\varphi):=\sup\{p_1\geq 0: \varphi\,\text{is of lower type}\, p_1\}
\end{align*}
represents the critical lower type index of $\varphi$.
For Orlicz function $\varphi$, it is of upper type $p_2$ (with $p_2\in [0,\infty)$) whenever one can find a constant $C_{p_2}>1$ such that
\begin{align*}
\varphi(st)\leq C_{p_2}s^{p_2}\varphi(t)\qquad \forall\, t\geq0,\,\; s\in [1,\infty).
\end{align*}
The symbol
\begin{align*}
  I(\varphi):=\inf\{p_2\geq 0: \varphi\,\text{is of upper type}\,p_2
 \}
\end{align*}
represents the critical upper type index of $\varphi$.
\begin{definition}
We term $\varphi: [0,\infty)\to [0,\infty)$ a growth function if it is of upper type $1$ and lower type $p$ with $p\in(0,1]$.
\end{definition}

\begin{definition}[\cite{Ky2014}]\label{def.1.1}
For a given Orlicz function $\varphi$, the space $\mathrm{BMO}_{\varphi}(\mathbb{R}^n)$ stands for the set of all locally integrable functions
$f$ on $\mathbb{R}^n$ satisfying
 \begin{align*}
 \|g\|_{\mathrm{BMO}_{\varphi}(\mathbb{R}^n)}:= \sup_{B \subset \mathbb{R}^n}\frac{1}{\|\mathbf{1}_B\|_{L^{\varphi}(\mathbb{R}^n)}}\int_B |g(x)-g_B|\,dx <\infty,
 \end{align*}
where the supremum ranges over all balls $B$ on $\mathbb{R}^n$ and
 $$g_B :=\frac{1}{|B|}\int_B g(x)\, dx.$$
If $\varphi(t)=t$, $\mathrm{BMO}_{\varphi}(\mathbb{R}^n)=\mathrm{BMO}(\mathbb{R}^n)$.
For more research on $\mathrm{BMO}$ type spaces, see \cite{Adams1996, F-L2023, F-L JFA2024, FLZ25, Y2017}.
\end{definition}
The Orlicz-Lebesgue space $L^\varphi(\rn)$ contains all locally integrable functions $g$ for which
$$\|g\|_{L^\varphi(\rn)}:=\inf\left\{\lambda>0: \, \int_{\rn}\varphi(|g(x)|/\lambda)\,dx\leq 1\right\}<\infty.$$
We denote by $\mathcal{S}(\mathbb{R}^n)$ the Schwartz class of rapidly decreasing smooth functions on $\mathbb{R}^n$, and by $\mathcal{S}'(\mathbb{R}^n)$
its topological dual
(space of tempered distributions). Define
$$\mathcal{S}_{0}(\mathbb{R}^n):=\left\{\psi\in \mathcal{S}(\mathbb{R}^n) : \,
\|\psi\|= \sup_{\substack{x \in \mathbb{R}^n \\ |\alpha|\leq 1}} (1+|x|)^{2(n+1)} \, |\partial_x^\alpha \psi(x)| \leq 1 \right\}.$$
Let $g\in\mathcal{S}'(\mathbb{R}^n)$ and $\psi_t(\cdot) = t^{-n} \psi(\cdot/t)$.
The nontangential grand maximal function $\mathfrak{M}g$ is defined by
\begin{align*}
\mathfrak{M}(g)(x):= \sup_{\psi \in S_0(\mathbb{R}^n)} \; \sup_{\substack{y \in \mathbb{R}^n \\ |y-x| < t}} |(g * \psi_t)(y)|  \quad \text{for all}\, x\in\mathbb{R}^n.
\end{align*}

\begin{definition}
The Orlicz-Hardy space $H^\varphi(\mathbb{R}^n)$ is the collection of all tempered distributions
 $g$ such that $\mathfrak{M}g$ belongs to $L^\varphi(\mathbb{R}^n)$. Its (quasi-)norm is
\begin{align*}
\|g\|_{H^\varphi}:= \|\mathfrak{M}g\|_{L^\varphi}.
\end{align*}
\end{definition}

 \begin{definition}
Suppose that $\varphi$ is a growth function satisfying
 $$\frac{n}{n+1}<i(\varphi)\leq1.$$
Given $b\in\mathrm{BMO}_{\varphi}(\rn)$,
we denote by $H_b^{\varphi}(\rn)$ the collection of all functions $g\in H^{\varphi}(\rn)$ for which the function
$$[b,\mathfrak{M}]g:=\mathfrak{M}\left((b-b(\cdot))g(\cdot)\right)$$
is an element of $L^1(\mathbb{R}^n)$. For any $g\in H_b^{\varphi}(\rn)$, define
\begin{align}\label{intro1.2}
\|g\|_{H_b^\varphi(\mathbb{R}^n)}:= \|g\|_{H^\varphi(\mathbb{R}^n)} \|b\|_{\mathrm{BMO}_\varphi(\mathbb{R}^n)} + \|[b,\mathfrak{M}]\|_{L^1(\mathbb{R}^n)}.
\end{align}
\end{definition}


The subsequent primary results offer a generalisation of the preceding results on the boundedness in Ky \cite{Ky2013} to the Orlicz setting.

\begin{theorem}\label{Theo-main-b-1}
Suppose that $0<\alpha<n$ and $\varphi$ is growth function with
\begin{align}\label{1.2}
n/(n+1)<i(\varphi)\leq I(\varphi)<1
\end{align}
and
\begin{align}\label{eq.1.5}
\begin{cases}
1/I(\varphi)>1/i(\varphi)-1 \quad  & \mbox{if}\ n=1;\\
1/I(\varphi)>(\lfloor n(1/i(\varphi)-1)\rfloor+n-1)/n \quad  & \mbox{if}\ n\geq 2.
\end{cases}
\end{align}
If $b\in\mathrm{BMO}_{\varphi}(\rn)$ and $g\in H_b^{\varphi}(\rn)$, then there exists a positive constant $C$  such that
$$\|[b, I_\alpha]g\|_{L^{\frac{n}{n-\alpha}}(\rn)}\leq C \|g\|_{H_b^{\varphi}(\rn)}.$$
\end{theorem}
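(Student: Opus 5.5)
We follow Ky's strategy from \cite{Ky2013}, built on the bilinear decomposition of Bonami--Grellier type for the product of $H^\varphi(\rn)$ and $\mathrm{BMO}_\varphi(\rn)$. The target identity is
\begin{align*}
[b,I_\alpha]g = I_\alpha\bigl(\mathfrak{S}(g,b)\bigr) \qquad\text{with}\qquad \mathfrak{S}(g,b):=bg-\mathfrak{T}(g,b),
\end{align*}
where $g\mapsto \mathfrak{T}(g,b)$ is a bilinear operator satisfying $[b,I_\alpha]g=-I_\alpha(\mathfrak{T}(g,b))+\text{(a term controlled by }[b,\mathfrak{M}]g)$. Concretely, write $g=\sum_j\lambda_j a_j$ as an atomic decomposition in $H^\varphi(\rn)$. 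Here the $a_j$ are $(\varphi,\infty,s)$-atoms supported in balls $B_j$, and $s\ge\lfloor n(1/i(\varphi)-1)\rfloor$. Then split
\begin{align*}
b\,g=\sum_j\lambda_j (b-b_{B_j})a_j+\sum_j\lambda_j b_{B_j}a_j .
\end{align*}
The first sum should lie in $L^1(\rn)$, and the second in a Hardy--Orlicz type space $H^{\Phi}(\rn)$. The growth conditions \eqref{1.2} and \eqref{eq.1.5} on $i(\varphi)$ and $I(\varphi)$ are what make the second piece converge and remain in a space on which $I_\alpha$ maps into $L^{n/(n-\alpha)}$. This uses the Stein--Weiss estimate \eqref{eq-H-1} and its extension to Hardy spaces $H^q$ with $q$ slightly below $1$, for which the atom moment condition $\lfloor n(1/i(\varphi)-1)\rfloor$ enters.

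\textbf{Step 1 (atomic level).} For a single atom $a$ supported in $B$ we have
\begin{align*}
[b,I_\alpha]a=(b-b_B)I_\alpha a - I_\alpha\bigl((b-b_B)a\bigr).
\end{align*}
\begin{itemize}
\item[(a)] We bound $\|(b-b_B)I_\alpha a\|_{L^{n/(n-\alpha)}}$ by a constant times $\|b\|_{\mathrm{BMO}_\varphi}\,\|\mathbf 1_B\|_{L^\varphi}^{-1}|B|^{\ldots}$. On $2B$ we use Hölder's inequality with the John--Nirenberg inequality adapted to $\mathrm{BMO}_\varphi$; this uses $\|\mathbf1_B\|_{L^\varphi}\sim 1/\varphi^{-1}(1/|B|)$ and the upper/lower type bounds. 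Off $2B$ we use the cancellation of $a$ and Taylor expansion of the kernel, giving decay $|x-x_B|^{-(n-\alpha+s+1)}\,|b(x)-b_B|$. The logarithmic growth of $|b_{2^kB}-b_B|$ is absorbed by the extra decay.
\item[(b)] The term $I_\alpha((b-b_B)a)$ is handled globally, using \eqref{eq-Ln-L1} together with the $L^1$-control encoded in $[b,\mathfrak{M}]g$.
\end{itemize}

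\textbf{Step 2 (summing up).} We sum over $j$. Since $\varphi$ has lower type $p>n/(n+1)$ and upper type $<1$, we have $\sum_j|\lambda_j|\,\|\mathbf1_{B_j}\|_{L^\varphi}^{-1}|B_j|$-type quantities controlled by $\|g\|_{H^\varphi}$ only in a quasi-normed sense. Therefore the sum of the $L^{n/(n-\alpha)}$ norms of the atomic terms must be organized through the Orlicz modular, not the triangle inequality. This is where \eqref{eq.1.5} is used: it guarantees an exponent $q\in(0,1)$ with $I(\varphi)<q$ such that the rescaled atoms $\lambda_j(b-b_{B_j})I_\alpha a_j$ sum in $L^{n/(n-\alpha)}$. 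For the global term, the key point is that
\begin{align*}
\sum_j\lambda_j(b-b_{B_j})a_j=(b-b(\cdot))g\ \text{ modulo an } H^{\Phi}\text{ function},
\end{align*}
with $\mathfrak{M}$ of the whole expression bounded by $[b,\mathfrak{M}]g+C\,\mathfrak{M}g\cdot\|b\|$. Hence this piece lies in $H^1(\rn)$, with norm at most $\|g\|_{H^\varphi_b}$. Applying \eqref{eq-H-1} (Stein--Weiss) then gives the $L^{n/(n-\alpha)}$ bound.

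\textbf{Main obstacle.} The hardest step is Step 2: justifying that $\mathfrak{T}(g,b)$, and hence $(b-b(\cdot))g$ viewed as a distribution, lies in $H^1(\rn)$ with norm controlled by $\|[b,\mathfrak{M}]g\|_{L^1}+\|g\|_{H^\varphi}\|b\|_{\mathrm{BMO}_\varphi}$, while also interchanging $I_\alpha$ with the infinite atomic sum. We would first prove everything for $g$ a finite linear combination of atoms, which is dense in $H^\varphi_b$ in the appropriate sense. We would then pass to the limit using Fatou's lemma and the weak-type estimate \eqref{eq-Ln-L1}, which gives a.e. convergence along a subsequence.
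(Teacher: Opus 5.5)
Your outline is sound in part. Write $[b,I_\alpha]g=\sum_j\lambda_j(b-b_{B_j})I_\alpha a_j-I_\alpha h$ with $h:=\sum_j\lambda_j(b-b_{B_j})a_j$. The first sum is handled by your Step 1(a), which is essentially Lemma \ref{lem1.2}, plus the $\ell^1$ bound \eqref{eq.1.7} and the plain triangle inequality. So no modular argument is needed there, and this is not where \eqref{eq.1.5} enters. The second term follows from \eqref{eq-H-1} once $\|h\|_{H^1(\rn)}\lesssim\|g\|_{H^\varphi_b(\rn)}$. But that $H^1$ bound is the whole content of the theorem, and your justification of it does not work:
\begin{itemize}
\item $[b,\mathfrak{M}]g(x)=\mathfrak{M}\bigl((b(x)-b(\cdot))g\bigr)(x)$ freezes $b$ at the evaluation point, so ``$(b-b(\cdot))g$'' is not a single distribution you can compare with $h$.
\item The claimed bound $\mathfrak{M}h\le[b,\mathfrak{M}]g+C\,\mathfrak{M}g\,\|b\|$ is false in general. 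The true error term carries the unbounded factors $|b(x)-b_{B_j}|$, and $\sum_j|\lambda_j|\mathfrak{M}a_j\neq\mathfrak{M}g$.
\item Even if that bound held, $\mathfrak{M}g$ is controlled only in $L^\varphi$. Since $I(\varphi)<1$, this gives no $L^1$ control (think of $\varphi(t)=t^p$, $p<1$).
\item Agreement ``modulo an $H^\Phi$ function'' yields nothing in $H^1$, because $H^\Phi$ is strictly larger.
\item Step 1(b), via \eqref{eq-Ln-L1}, gives only a weak-type bound.
\end{itemize}

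The paper supplies the missing link through the wavelet bilinear decomposition. Use the wavelet-adapted atoms $a_k$ on dyadic cubes $R_k$. The algebraic identity in the proof of Lemma \ref{1.4} then gives $h=\Pi_1(g,b)+\Pi_3(g,b)+\sum_k\Pi_2(a_k,b-b_{B_k})+\Pi_4(g,b)$. The first three pieces lie in $H^1$ with norm $\lesssim\|g\|_{H^\varphi}\|b\|_{\mathrm{BMO}_\varphi}$ by Proposition \ref{prop:3.14}; this is where $I(\varphi)<1$ and \eqref{eq.1.5} are actually used. The bound $\|\Pi_4(g,b)\|_{H^1}\lesssim\|g\|_{H^\varphi_b}$ is the nontrivial characterization in Lemma \ref{Theorem.11}.

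If you want to stay atomic, use this substitute. For a.e. fixed $x$, $h=\sum_j\lambda_j(b(x)-b_{B_j})a_j-(b(x)-b(\cdot))g$, hence $\mathfrak{M}h(x)\le[b,\mathfrak{M}]g(x)+\sum_j|\lambda_j|\,|b(x)-b_{B_j}|\,\mathfrak{M}a_j(x)$. Then prove the weighted atom estimate $\|(b-b_B)\mathfrak{M}a\|_{L^1}\lesssim\|a\|_{L^\infty_B}\|\mathbf 1_B\|_{L^\varphi}\|b\|_{\mathrm{BMO}_\varphi}$, an $L^1$ analogue of Definition \ref{def:class-K}(iii) for $\mathfrak{M}$. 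It uses the $|x-x_B|^{-(n+1)}$ decay of $\mathfrak{M}a$ off the ball and $i(\varphi)>n/(n+1)$. Finally, sum with \eqref{eq.1.7}. That estimate is the idea missing from your proposal.

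You would also still have to make the definition of $[b,I_\alpha]g$ and the passage to the limit rigorous. Density of finite atomic sums in $H^\varphi_b$ is not established. The paper avoids needing it, because its identity $[b,I_\alpha]g=\mathfrak{T}(g,b)-I_\alpha(\Pi_4(g,b))$ holds on all of $H^\varphi$.
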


\begin{theorem}\label{Theo-main-2}
Under the same assumptions on $\varphi$, $\alpha$, and $b$ as in Theorem \ref{Theo-main-b-1}, if $g\in H^{\varphi}(\rn)$,
then there exists constant $C>0$ such that
$$\|[b, I_\alpha]g\|_{L^{\frac{n}{n-\alpha},\,\infty}(\rn)}\leq C\|g\|_{H^{\varphi}(\rn)}.$$
\end{theorem}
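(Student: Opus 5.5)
The statement to prove is Theorem~\ref{Theo-main-2}. I would follow Ky's bilinear decomposition strategy, adapted to the Orlicz setting.

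\textbf{Step 1: bilinear decomposition.} Establish (or invoke, as in the Orlicz analogue of Ky's work \cite{Ky2014}) a decomposition of the product of $g\in H^\varphi(\rn)$ and $b\in\mathrm{BMO}_\varphi(\rn)$. One writes $g=\sum_j\lambda_j a_j$ with $(\varphi,\infty,s)$-atoms $a_j$ supported in balls $B_j$. The coefficient sum is controlled by the Orlicz-type quantity
\[
\Lambda(\{\lambda_j a_j\})=\inf\Big\{\lambda>0:\sum_j\varphi\Big(\frac{|\lambda_j|}{\lambda\|\mathbf 1_{B_j}\|_{L^\varphi}}\Big)|B_j|\le1\Big\}\lesssim\|g\|_{H^\varphi}.
\]
Then
\[
b\,g=\sum_j\lambda_j(b-b_{B_j})a_j+\sum_j\lambda_j b_{B_j}a_j ,
\]
and the first sum is split further so that
\[
b\,g=U(g,b)+V(g,b),\qquad U(g,b)\in L^1(\rn),\quad V(g,b)\in H^{\Phi}(\rn),
\]
with $\|U\|_{L^1}+\|V\|_{H^\Phi}\lesssim\|g\|_{H^\varphi}\|b\|_{\mathrm{BMO}_\varphi}$ for a suitable growth function $\Phi$. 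The conditions \eqref{1.2} and \eqref{eq.1.5} on $i(\varphi)$ and $I(\varphi)$ are exactly what I expect to be needed here. They make the atoms have enough vanishing moments ($s\ge\lfloor n(1/i(\varphi)-1)\rfloor$). They also guarantee that $\Phi$ has lower type in $(n/(n+\alpha),1]$ and that the pieces $(b-b_{B_j})a_j$ minus their moment corrections are controlled. The estimate uses the John--Nirenberg type inequality $\int_B|b-b_B|^q\lesssim\|\mathbf 1_B\|_{L^\varphi}^q|B|^{1-q}\|b\|^q_{\mathrm{BMO}_\varphi}$.

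\textbf{Step 2: splitting the commutator.} For $g$ a finite combination of atoms (a dense class) we get
\[
[b,I_\alpha]g=b\,I_\alpha g-I_\alpha(U)-I_\alpha(V).
\]
The term $I_\alpha(U)$ is handled by \eqref{eq-Ln-L1}: $\|I_\alpha U\|_{L^{n/(n-\alpha),\infty}}\lesssim\|U\|_{L^1}$. For $I_\alpha(V)$ I would prove an Orlicz version of \eqref{eq-H-1}, namely $I_\alpha:H^\Phi\to L^{n/(n-\alpha)}$, atom by atom. On $2B$ one uses $L^2$ bounds. Off $2B$ one Taylor-expands the kernel using the moments, which gives decay $|x-x_B|^{-(n-\alpha+s+1)}r^{s+1}$. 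Summing requires $\Phi$ of lower type greater than $n/(n+s+1)$ and uses $n/(n-\alpha)>1$ together with the sublinear summation over $\Lambda$.

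\textbf{Step 3: the term $b\,I_\alpha g$.} This is the main obstacle, since $b$ is merely in $\mathrm{BMO}_\varphi$. I would not treat it globally. Instead I would regroup atom by atom:
\[
[b,I_\alpha]g=\sum_j\lambda_j\big((b-b_{B_j})I_\alpha a_j\big)-I_\alpha\Big(\sum_j\lambda_j(b-b_{B_j})a_j\Big).
\]
I would then estimate the first sum directly in $L^{n/(n-\alpha),\infty}$, which is quasi-normed but $p$-convex for $p<n/(n-\alpha)$.

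On $2B_j$, H\"older's inequality together with $\|I_\alpha a_j\|_{L^{q}}\lesssim|B_j|^{\dots}\|\mathbf 1_{B_j}\|^{-1}_{L^\varphi}$ and John--Nirenberg gives a contribution of order $|\lambda_j|$.

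Off $2B_j$, the moment decay of $I_\alpha a_j$ combined with $|b-b_{B_j}|\lesssim(1+\log(|x-x_j|/r_j))\,\|\mathbf 1_{B_j}\|_{L^\varphi}/|B_j|$ on average over dyadic annuli gives a bound summable in the weak norm.

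Summing in $j$ requires converting $\sum_j|\lambda_j|$-type bounds into $\Lambda\lesssim\|g\|_{H^\varphi}$. I would do this using lower type $p$ of $\varphi$ and the upper-type condition $I(\varphi)<1$ (which is where \eqref{eq.1.5} enters), in the same way Ky sums $\sum|\lambda_j|^p$.

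\textbf{Step 4: density.} Finally, extend from finite atomic combinations to all of $H^\varphi(\rn)$ by density and the continuity of all terms.
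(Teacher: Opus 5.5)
\textbf{The gap: Steps~1 and~2.} The last term, $I_\alpha\big(\sum_j\lambda_j(b-b_{B_j})a_j\big)$, is left to these two steps, and both fail as stated.

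First, the bound $\|U\|_{L^1}+\|V\|_{H^\Phi}\lesssim\|g\|_{H^\varphi}\|b\|_{\mathrm{BMO}_\varphi}$ for a splitting of $bg$ cannot hold. Replacing $b$ by $b+c$ leaves the right side unchanged but adds $cg$ to the left. The pieces $\lambda_j b_{B_j}a_j$ are not controlled by $\|b\|_{\mathrm{BMO}_\varphi}$; they disappear only through the cancellation inside the commutator.

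Second, the bound $I_\alpha:H^\Phi\to L^{\frac{n}{n-\alpha}}$ that you plan to prove fails whenever $\Phi(t)/t\to 0$ as $t\to\infty$, for example $\Phi(t)=t^p$ with $p<1$, or $\Phi(t)=t/\log(e+t)$. Take a fixed smooth atom and rescale it to a ball $B$ with $\|a\|_{L^\infty}=\|\mathbf 1_B\|_{L^\Phi}^{-1}$. Then $\|I_\alpha a\|_{L^{\frac{n}{n-\alpha}}}\approx |B|/\|\mathbf 1_B\|_{L^\Phi}=|B|\,\Phi^{-1}(1/|B|)$, which is unbounded as $|B|\to0$. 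This is pure scaling, so extra vanishing moments do not help. Step~2 works only if the $V$-part already lies in $H^1$.

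\textbf{What the paper does instead.} It splits
\[
\sum_k(b-b_{B_k})a_k=\Pi_1(g,b)+\Pi_3(g,b)+\sum_k\Pi_2(a_k,b-b_{B_k})+\Pi_4(g,b).
\]
The first three pieces lie in $H^1$ by Proposition~\ref{prop:3.14} and are sent to $L^{\frac{n}{n-\alpha}}$ by \eqref{eq-H-1}. The piece $\Pi_4(g,b)$ lies in $L^1$ and is sent to $L^{\frac{n}{n-\alpha},\infty}$ by \eqref{eq-Ln-L1}.

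\textbf{What is right, and a simpler fix.} Your Step~3 regrouping
\[
[b,I_\alpha]g=\sum_j\lambda_j(b-b_{B_j})I_\alpha a_j-I_\alpha\Big(\sum_j\lambda_j(b-b_{B_j})a_j\Big)
\]
is the correct structure. Your treatment of the first sum is essentially the paper's Lemma~\ref{lem1.2}, i.e.\ condition (iii) of $\mathcal{K}_{\varphi,\alpha}$, which even gives a strong $L^{\frac{n}{n-\alpha}}$ bound. It is combined with the summation \eqref{eq.1.7}.

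One correction there: for $\varphi$ of lower type $\sigma<1$, $|b_{2^kB}-b_B|$ grows like $2^{kn(1/\sigma-1)}\|\mathbf 1_B\|_{L^\varphi}/|B|$, not logarithmically. The annular sum still converges because $n(1/\sigma-1)<1$, and this is where $i(\varphi)>n/(n+1)$ enters.

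For the weak-type statement you need no bilinear decomposition at all. Since the $a_j$ are $L^\infty$ atoms,
\[
\Big\|\sum_j\lambda_j(b-b_{B_j})a_j\Big\|_{L^1}\le\sum_j|\lambda_j|\,\|a_j\|_{L^\infty}\int_{B_j}|b-b_{B_j}|\,dx\le\|b\|_{\mathrm{BMO}_\varphi}\sum_j|\lambda_j|\lesssim\|g\|_{H^\varphi}\|b\|_{\mathrm{BMO}_\varphi}.
\]
The last step is \eqref{eq.1.7}; only upper type $1$ of $\varphi$ is used there. Then \eqref{eq-Ln-L1} finishes the estimate.

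Replace Steps~1 and~2 by this observation and drop the $H^\Phi$ machinery. Your argument is then complete on finite atomic sums, and your density step applies. This direct route is simpler than the paper's for this theorem. The paper's finer $H^1+L^1$ splitting is only really needed for the strong-type Theorem~\ref{Theo-main-b-1}.
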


\begin{remark}\rm Theorems \ref{Theo-main-b-1} and \ref{Theo-main-2} are a further extension for the results of Ky \cite[Corollary 8.1 and Theorem 8.2]{Ky2013} in the Orlicz-type spaces framework. A study related to the Calder\'on-Zygmund operator, see \cite{F-L2024}.
\end{remark}

In Section \ref{s2}, we introduce the operator class $\mathcal{K}_{\varphi, \, \alpha}$ and prove $I_\alpha\in \mathcal{K}_{\varphi, \, \alpha}$. Then we establish the decomposition for commutator via wavelets in Section \ref{s3}. Finally, Theorems \ref{Theo-main-b-1} and \ref{Theo-main-2} are verified in Section \ref{s4}.

Finally, we give the notational rules adopted in this work.
Let $\mathbb{N}:=\{1,2,\cdots\}$.
For every $ t\in[0,\infty)$,
$\lfloor t\rfloor$ denotes the supremum of all integers $k$ with $k\leq t$.
For $x_{0}\in\rn$ and $r\in (0,\infty)$, define $B(x_0, r):= \{z\in\rn: \, |z-x_0|<r\}$.
The symbol $g_{1}\ls g_{2}$ (or $g_{2}\gs g_{1})$ means $g_{1}\leq Cg_{2}$ (or $g_{2}\geq Cg_{1}$), and $g_{1}\approx g_{2}$ means both $g_{1}\ls g_{2}$ and $g_{2}\ls g_{1}$ hold.

\section{Class $\mathcal{K}_{\varphi, \, \alpha}$ and typical example}\label{s2}
In order to give the definition of $\mathcal{K}_{\varphi, \, \alpha}$,
we start by recalling from \cite{Ky2014} the atomic description of Orlicz-Hardy spaces.
Subsequently, a typical example is presented in $\mathcal{K}_{\varphi, \, \alpha}$.

For ball $B\subset\mathbb{R}^n$ and $q\in[1,\infty]$, the symbol $L^q_{B}$ means the space of all measurable functions $g$ on $\mathbb{R}^n$, which $g$ satisfies $\operatorname{supp}g\subset B$, the norm of $L^q_{B}$ is defined by
\begin{align}\label{L_B^{q}666}
\|g\|_{L^q_{B}}:=
\begin{cases}
\displaystyle\left(\frac{1}{|B|}\int_B |g(x)|^q\,dx\right)^{\!1/q}<\infty, & 1\le q<\infty;\\[2.5ex]
\displaystyle\operatorname{ess\,sup}_{x\in B}|g(x)|<\infty, & q=\infty.
\end{cases}
\end{align}

\begin{definition}\label{atom}
Suppose that $\varphi$ is growth function with the critical lower type exponent $i(\varphi)$.
A triple $(\varphi, q, s)$ is termed admissible if $q\in(1,\infty]$
and the positive integer $s$ satisfies
$$s\geq m(\varphi):=\lfloor n(1/i(\varphi)-1)\rfloor.$$
A locally integrable function $a$ on $\rn$ is termed $(\varphi, q, s)$-atom if the next conditions are valid:
\begin{enumerate}
\item[\rm (i)] $\operatorname{supp} a \subset B$ for some ball $B\subset\mathbb{R}^n$;
\item[\rm (ii)] $\left\|a\right\|_{L^{q}_{B}}\leq\left\|{\mathbf 1}_{B}\right\|^{-1}_{L^{\varphi}(\rn)}$;
\item[\rm (iii)] $\int_{\mathbb{R}^{n}}x^{\gamma}a(x)\,dx=0$  for every multi-indices $\gamma\in\zz_+^n$ with $|\gamma|\leq s$.
\end{enumerate}
\end{definition}

\begin{definition}
For $1\le q\le\infty$ and $s\geq m(\varphi):=\lfloor n(1/i(\varphi)-1)\rfloor$,  suppose that $\{b_j\}_{j}$ is a sequence of multiples of $(\varphi,q,s)$-atom, each of which is supported in ball $B_j$. Define
$$\Lambda_q\left(\{b_j\}\right):=\inf\left\{\lambda>0:\sum_{j}\varphi\left(\frac{\|b_j\|_{L_{B_j}^q}}{\lambda}\right)|B_j|\leq 1\right\}.$$
\end{definition}

\begin{definition}\label{def:class-K}
Let $0<\alpha<n$ and $\varphi$ be growth function such that
$n/(n+1)<i(\varphi)\le 1$.
The $\mathcal{K}_{\varphi, \, \alpha}$ stands for the set consisting of all sublinear operator $R$ that fulfills the following three conditions below:
\begin{enumerate}
\item[\rm (i)] $R$ is bounded from $H^{1}(\mathbb{R}^{n})$ to $L^{\frac{n}{n-\alpha}}(\rn)$;
\item[\rm (ii)]$R$  is bounded from $L^{1}(\mathbb{R}^{n})$ to
$L^{\frac{n}{n-\alpha}, \, \infty}(\rn)$;
\item[\rm (iii)] For every $b\in\mathrm{BMO}_{\varphi}(\mathbb{R}^{n})$ and every $(\varphi,q,0)$-atom $a$ whose support is contained in a ball $B\subset\mathbb{R}^{n}$ with $q\in(1,n/\alpha)$, there exists constant $C>0$ such that
\begin{align*}
\|(b-b_{B})Ra\|_{L^{\frac{n}{n-\alpha}}(\mathbb{R}^{n})}
\leq C\|a\|_{L^{q}_{B}}\|{\mathbf{1}_{B}\|_{L^{\varphi}(\rn)}\|b\|_{\mathrm{BMO}_{\varphi}(\rn)}}.
\end{align*}
\end{enumerate}
\end{definition}

To facilitate subsequent proof, it is proposed that the result in \cite[(2.3)]{F-L2024} be employed as a lemma.

\begin{lemma}\label{lemma2.3}
Let $\varphi: [0,\infty)\to [0,\infty)$ is of upper type $p_{1}\in(0,1]$ and lower type $p_{2}\in(0,1]$. Then, for ball $B(x_0, r)$ and $t>0$,
\begin{align*}
\|{\mathbf 1}_{B(x_0,\, tr)}\|_{L^{\varphi}(\rn)}\leq
\begin{cases}
C^{+}t^{n/p_{1}}\|{\mathbf 1}_{B(x_0,\, r)}\|_{L^{\varphi}(\rn)}  &\text{as}\  t\in(0, 1];\\
C^{-}t^{n/p_{2}}\|{\mathbf 1}_{B(x_0,\, r)}\|_{L^{\varphi}(\rn)}  &\text{as}\ t\in[1, \infty),
\end{cases}
\end{align*}
where $C^{+}$ and $C^{-}$ are positive constants.
\end{lemma}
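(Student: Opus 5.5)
The proof works directly from the definition of the Luxemburg norm. It rescales a test level $\mu$ for the ball $B:=B(x_0,r)$ into a test level for $B(x_0,tr)$, using only that
$$|B(x_0,tr)|=t^n|B|.$$
In the notation of the lemma, the hypothesis on $\varphi$ reads
$$\varphi(su)\le C_{p_1}s^{p_1}\varphi(u)\quad (s\ge 1), \qquad \varphi(su)\le C_{p_2}s^{p_2}\varphi(u)\quad (s\in(0,1]).$$
Set $\lambda:=\|\mathbf 1_B\|_{L^\varphi(\rn)}$. For every $\mu>\lambda$ we have $\varphi(1/\mu)|B|\le 1$, because $\int_{\rn}\varphi(\mathbf 1_B/\mu)\,dx=\varphi(1/\mu)|B|$ and $\varphi$ is non-decreasing. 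The plan is to show, in each case, that $\varphi(1/\mu')\,t^n|B|\le 1$ for $\mu'=K t^{n/p}\mu$ with a suitable constant $K\ge 1$ independent of $t,r,x_0$. This gives $\|\mathbf 1_{B(x_0,tr)}\|_{L^\varphi}\le K t^{n/p}\mu$, and letting $\mu\downarrow\lambda$ finishes the argument.

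\textbf{Case $t\ge 1$.} Take $K:=C_{p_2}^{1/p_2}$ and $\mu':=Kt^{n/p_2}\mu$. Then
$$\frac{1}{\mu'}=\frac{s}{\mu}\quad\text{with}\quad s=K^{-1}t^{-n/p_2}\in(0,1].$$
The lower type $p_2$ property gives
$$\varphi(1/\mu')\le C_{p_2}K^{-p_2}t^{-n}\varphi(1/\mu)=t^{-n}\varphi(1/\mu).$$
Hence $\varphi(1/\mu')\,t^n|B|\le \varphi(1/\mu)|B|\le 1$. This yields the bound with $C^-=C_{p_2}^{1/p_2}$.

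\textbf{Case $t\in(0,1]$.} Take $K:=C_{p_1}^{1/p_1}$ and $\mu':=Kt^{n/p_1}\mu$, so $s=K^{-1}t^{-n/p_1}$ as before. The one subtlety is that $s$ need not be $\ge 1$ when $t$ is close to $1$, so I split into two subcases.
\begin{itemize}
\item If $s\ge 1$, the upper type $p_1$ property gives $\varphi(1/\mu')\le C_{p_1}K^{-p_1}t^{-n}\varphi(1/\mu)=t^{-n}\varphi(1/\mu)$.
\item If $s<1$, monotonicity of $\varphi$ gives $\varphi(s/\mu)\le\varphi(1/\mu)\le t^{-n}\varphi(1/\mu)$, since $t\le 1$.
\end{itemize}
In both subcases $\varphi(1/\mu')\,t^n|B|\le 1$, which gives the bound with $C^+=C_{p_1}^{1/p_1}$.

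The only step needing care is this subcase split for $t\le 1$. The rest is a direct use of the type inequalities with the constant $K$ chosen to absorb $C_{p_1}$ or $C_{p_2}$.
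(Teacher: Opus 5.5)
Your proof is correct. The two type inequalities are applied in the right ranges, the choice $K=C_{p_i}^{1/p_i}\ge 1$ is legitimate (putting $s=1$ in either type inequality forces $C_{p_i}\ge 1$), and the subcase split for $t\le 1$ handles the fact that $K^{-1}t^{-n/p_1}$ may fall below $1$, outside the range of the upper-type inequality. There you correctly fall back on monotonicity and $t^{-n}\ge 1$.

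The paper gives no proof of this lemma; it imports it from [F-L2024, (2.3)], so there is no in-paper argument to match. A standard way to derive such estimates goes through the identity $\|\mathbf 1_B\|_{L^{\varphi}(\rn)}=1/\varphi^{-1}(|B|^{-1})$. One then uses that $\varphi^{-1}$ inherits lower type $1/p_1$ and upper type $1/p_2$ from $\varphi$, and applies these with the dilation factor $t^{-n}$. That route tacitly needs $\varphi$ to be continuous and strictly increasing, or some generalized inverse, which the paper's definition of an Orlicz function (merely non-decreasing) does not guarantee.

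Your approach rescales the Luxemburg test level $\mu\mapsto Kt^{n/p}\mu$ directly. It uses only monotonicity, $\varphi(0)=0$, the scaling $|B(x_0,tr)|=t^n|B(x_0,r)|$ and the two type inequalities. It is therefore both more elementary and valid under weaker hypotheses than the inverse-function argument.
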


\begin{lemma}\label{lem1.2}
Let $0<\alpha<n$ and $\varphi$ be growth function with
$n/(n+1)<i(\varphi)\leq 1$. Then $I_{\alpha}\in\mathcal{K_{\varphi,\,\alpha}}$.
\end{lemma}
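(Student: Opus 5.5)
Conditions (i) and (ii) of Definition \ref{def:class-K} are classical. Condition (i) is the Stein--Weiss estimate \eqref{eq-H-1}, and condition (ii) is the weak-type bound \eqref{eq-Ln-L1}. The whole task is therefore condition (iii). Fix $b\in\mathrm{BMO}_\varphi(\rn)$, a ball $B=B(x_0,r)$, $q\in(1,n/\alpha)$, and a $(\varphi,q,0)$-atom $a$ supported in $B$. Write $p:=n/(n-\alpha)$, and split $\rn=2B\cup(2B)^c$.

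\emph{Local part.} Define $1/\tilde q:=1/q-\alpha/n$, so that $\tilde q>p$ because $q>1$. By H\"older's inequality,
$$\|(b-b_B)I_\alpha a\|_{L^p(2B)}\le \|b-b_B\|_{L^{s}(2B)}\|I_\alpha a\|_{L^{\tilde q}},\qquad 1/s=1/p-1/\tilde q=1-1/q .$$
The Hardy--Littlewood--Sobolev inequality gives $\|I_\alpha a\|_{L^{\tilde q}}\ls \|a\|_{L^q}=|B|^{1/q}\|a\|_{L^q_B}$. For the $b$ factor, note that $\mathrm{BMO}_\varphi$ controls the oscillation on balls at the scale $\|\mathbf 1_B\|_{L^\varphi}/|B|$. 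By the John--Nirenberg-type inequality on a ball, which follows from the usual one applied to $b$ with normalization $\|\mathbf 1_{B'}\|_{L^\varphi}/|B'|$ on subballs (I expect this is available from \cite{Ky2014}), or more simply by the standard BMO-type estimate, we get
$$\|b-b_B\|_{L^s(2B)}\ls |B|^{1/s}\frac{\|\mathbf 1_{2B}\|_{L^\varphi}}{|2B|}\|b\|_{\mathrm{BMO}_\varphi}.$$
Lemma \ref{lemma2.3} lets us replace $\|\mathbf 1_{2B}\|_{L^\varphi}$ by $\|\mathbf 1_B\|_{L^\varphi}$, and the same applies to replacing $b_{2B}$ by $b_B$. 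The powers of $|B|$ combine to $|B|^{1/q+1-1/q-1}=1$, which yields the required bound.

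\emph{Nonlocal part.} For $x\notin 2B$ we use the cancellation $\int a=0$:
$$|I_\alpha a(x)|=\Big|\int_B\big(|x-y|^{\alpha-n}-|x-x_0|^{\alpha-n}\big)a(y)\,dy\Big|\ls \frac{r}{|x-x_0|^{n-\alpha+1}}\|a\|_{L^1}\le \frac{r|B|}{|x-x_0|^{n-\alpha+1}}\|a\|_{L^q_B}.$$
Decompose $(2B)^c$ into annuli $2^{k+1}B\setminus 2^kB$. On each annulus,
$$\|b-b_B\|_{L^p(2^{k+1}B)}\le \|b-b_{2^{k+1}B}\|_{L^p(2^{k+1}B)}+|b_{2^{k+1}B}-b_B||2^{k+1}B|^{1/p}.$$
The first term is $\ls |2^kB|^{1/p-1}\|\mathbf 1_{2^{k+1}B}\|_{L^\varphi}\|b\|_{\mathrm{BMO}_\varphi}$, again using the John--Nirenberg step. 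For the second term we telescope,
$$|b_{2^{k+1}B}-b_B|\ls \sum_{j\le k+1}\frac{\|\mathbf 1_{2^jB}\|_{L^\varphi}}{|2^jB|}\|b\|_{\mathrm{BMO}_\varphi}.$$
Lemma \ref{lemma2.3} with lower type $p_2=i(\varphi)-\epsilon$ gives $\|\mathbf 1_{2^jB}\|_{L^\varphi}\ls 2^{jn/p_2}\|\mathbf 1_B\|_{L^\varphi}$. Each summand is then at most $2^{jn(1/p_2-1)}\|\mathbf 1_B\|_{L^\varphi}/|B|$, so the sum is $\ls 2^{kn(1/p_2-1)}\|\mathbf 1_B\|_{L^\varphi}/|B|$ (up to a factor $k$ when $p_2$ is close to $1$).

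Collecting the pieces, the $k$-th annulus contributes
$$\ls \frac{r|B|}{(2^kr)^{n-\alpha+1}}\,2^{kn/p_2}\,|2^kB|^{1/p-1}\,\|\mathbf 1_B\|_{L^\varphi}\|a\|_{L^q_B}\|b\|_{\mathrm{BMO}_\varphi}.$$
Since $1/p-1=-\alpha/n$, the powers of $r$ cancel exactly. What remains is the factor $2^{k(-(n-\alpha+1)+n/p_2-\alpha)}=2^{k(n/p_2-n-1)}$. This is summable precisely because $p_2>n/(n+1)$, which we may arrange since $i(\varphi)>n/(n+1)$. Summing over $k$ gives (iii).

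The main obstacle is the local $L^s$ control of $b-b_B$ via $\mathrm{BMO}_\varphi$. The definition only controls $L^1$ oscillation, so we need a John--Nirenberg-type self-improvement adapted to the weight $\|\mathbf 1_B\|_{L^\varphi}/|B|$, which varies across subballs. To handle it, I would first try bounding $\|b-b_B\|_{L^s(2B)}$ through the Calder\'on--Zygmund decomposition proof of John--Nirenberg. There, subballs $B'\subset 2B$ satisfy $\|\mathbf 1_{B'}\|_{L^\varphi}/|B'|\ls (|2B|/|B'|)^{1-1/I}\cdots$, which only worsens the bound. Hence I would instead invoke the known fact from \cite{Ky2014} that $\mathrm{BMO}_\varphi$ norms are equivalent with $L^s$ oscillation for finite $s$.
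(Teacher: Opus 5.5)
Your proposal is correct and follows the paper's proof essentially step for step. It uses the same H\"older--Hardy--Littlewood--Sobolev estimate on $2B$ with $1/s=1-1/q$, the same mean-value bound from $\int a=0$ on the annuli $2^{k+1}B\setminus 2^kB$, the same telescoping of $b_{2^{k+1}B}-b_B$ via Lemma~\ref{lemma2.3}, and the same decay factor $2^{k(n/\sigma-n-1)}$, summable because $\sigma>n/(n+1)$. Like you, the paper simply quotes the $L^s$-oscillation bound for $b$, citing \cite[Lemma~3.4 and Lemma~5.5]{F-L2024}.

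One remark: the obstacle you describe is not real. Your own factor $(|2B|/|B'|)^{1-1/I}$ is at most $1$, and upper type $1$ alone already gives $\|\mathbf{1}_{B'}\|_{L^\varphi}/|B'|\lesssim\|\mathbf{1}_{2B}\|_{L^\varphi}/|2B|$ for $B'\subset 2B$. So $b$ lies in classical $\mathrm{BMO}(2B)$ with norm $\lesssim \|\mathbf{1}_{2B}\|_{L^\varphi}|2B|^{-1}\|b\|_{\mathrm{BMO}_\varphi}$, and the ordinary John--Nirenberg inequality applies directly.
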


\begin{proof}
Accoording to\cite[Theorem~3.1]{H^p Lu} and \cite[Theorem~1.2.3]{GTM250}, $I_{\alpha}$ has boundedness from $H^{1}(\mathbb{R}^{n})$ to $L^{\frac{n}{n-\alpha}}(\rn)$, and $I_\alpha$ also bounded from $L^{1}(\mathbb{R}^{n})$ to $L^{\frac{n}{n-\alpha}, \, \infty}(\rn)$.
In order to prove this lemma, it suffices to show that $I_{\alpha}$ satisfies Definition \ref{def:class-K}(iii).

Now, we begin verify that $I_{\alpha}$ satisfies Definition \ref{def:class-K}(iii).
From the condition $n/(n+1)<i(\varphi)$,
we should choose $\sigma$ such that $\varphi$ is of lower type $\sigma$ with $n/(n+1)< \sigma < i(\varphi)$. So,
\begin{align}\label{nsigma-1}
n\left(\frac{1}{\sigma}-1\right)<1.
\end{align}
Suppose $b\in\mathrm{BMO}_{\varphi}(\rn)$ and  $a$ is $(\varphi, q, 0)$ atom
with $\operatorname{supp}a \subset B$.
Denote
$$p_\alpha = \frac{n}{n-\alpha}.$$
Write
\begin{align}\label{I1-II2}
\|(b-b_{B})I_\alpha a\|_{L^{\frac{n}{n-\alpha}}(\mathbb{R}^{n})}
&\leq\|(b-b_B)\mathbf{1}_{2B} I_\alpha a\|_{L^{p_\alpha}(\rn)}+ \|(b-b_B)\mathbf{1}_{(2B)^c} I_\alpha a\|_{L^{p_\alpha}(\rn)}\\
&=:{\rm I}+{\rm II}.\notag
\end{align}

To estimate ${\rm I}$, if $q\in(1, n/\alpha)$ and $1/q_0=1/q-\alpha/n$,
then we see that $I_{\alpha}$ is bounded from $L^{q}(\mathbb{R}^{n})$ to $L^{q_0}(\mathbb{R}^{n})$.
Taking $r>1$ with $1/r+1/q_0=1/p_\alpha$,
by H\"{o}lder inequality, Lemma \ref{lemma2.3}
and \cite[Lemma~3.4 and Lemma~5.5]{F-L2024}, it follows that
\begin{align}\label{eq-I1}
{\rm I}
&\leq \left( \left( \int_{2B} \left| b(x) - b_B \right|^{p_\alpha \cdot \frac{r}{p_\alpha}}\,dx \right)^{\frac{p_\alpha}{r}} \cdot \left( \int_{2B}
\left| I_\alpha a(x) \right|^{p_\alpha \cdot \frac{q_0}{p_\alpha}}\,dx \right)^{\frac{p_\alpha}{q_0}} \right)^{\frac{1}{p_\alpha}}\\
&= \left( \int_{2B} |b(x) - b_B|^r\,dx \right)^{\frac{1}{r}} \left(\int_{2B} |I_\alpha a(x)|^{q_0}\,dx \right)^{\frac{1}{q_0}}\notag \\
&\ls \|a\|_{L^q(\mathbb{R}^n)}\left( \left( \int_{2B} |b(x) - b_{2B}|^r\,dx \right)^{\frac{1}{r}} + \left( \int_{2B} |b_{2B} - b_B|^r\,dx \right)^{\frac{1}{r}}\right)\notag \\
&\ls \|a\|_{L^{q}(\mathbb{R}^n)}\left(|2B|^{\frac{1}{r}-1}\|{\mathbf 1}_{2B}\|_{L^{\varphi}(\mathbb{R}^n)} \|b\|_{\mathrm{BMO}_{\varphi}(\mathbb{R}^n)} + |2B|^{\frac{1}{r}}2^{n/\sigma}|2B|^{-1}\|b\|_{\mathrm{BMO}_{\varphi}}\|{\mathbf 1}_{B}\|_{L^{\varphi}(\mathbb{R}^n)}\right)\notag\\
&\ls\|a\|_{L^q(\mathbb{R}^n)} \ |B|^{\frac{1}{r}-1} \|{\mathbf{1}}_{B}\|_{L^{\varphi}(\mathbb{R}^n)} \|b\|_{\mathrm{BMO}_{\varphi}(\mathbb{R}^n)}\notag\\
&= \|a\|_{L^{q}_{B}} |B|^{\frac{1}{q}+\frac{1}{r}-1} \|\mathbf{1}_B\|_{L^\varphi(\mathbb{R}^n)}\|b\|_{\mathrm{BMO}_{\varphi}(\mathbb{R}^n)}\notag\\
&= \|a\|_{L^{q}_{B}} \|\mathbf{1}_B\|_{L^\varphi(\mathbb{R}^n)}\|b\|_{\mathrm{BMO}_{\varphi}(\mathbb{R}^n)},\notag
\end{align}
where the penultimate step used $\|a\|_{L^q(\rn)}=\|a\|_{L^{q}_{B}}|B|^{1/q}$ (see \eqref{L_B^{q}666}) and the last step used
$$\frac{1}{q}+\frac{1}{r}-1=\frac{1}{q_0}+\frac{\alpha}{n}+\frac{1}{p_\alpha}-\frac{1}{q_0}-1
=\frac{\alpha}{n}+\frac{1}{p_\alpha}-1=0.$$

Below, we consider  ${\rm II}$.
Denote $B=B(x_0, r)$. Due to the fact that $a$ is a $(\varphi, q, 0)$-atom supported on a ball $B\subset\rn$, we have $\operatorname{supp}a\subset B$ and $\int_{\rn}a(x)\,dx=0$.
For any $z\in B$ and $x\in(2^{j+1}B)\setminus  (2^{j}B)$ with $j\in\nn$,
we find a point, which is labeled by $\xi_z$, lying on the straight segment joining $z$ to $x_0$, satisfying that
\begin{align}\label{eq.1.2}
|I_\alpha a(x)|&
=\left|\int_B a(z)\left(\frac{1}{|x-z|^{n-\alpha}}-\frac{1}{|x-x_0|^{n-\alpha}}\right)\,dz\right|\\
&\leq\int_B\left|\frac{1}{|x-z|^{n-\alpha}}-\frac{1}{|x-x_0|^{n-\alpha}}\right||a(z)|\,dz\notag\\
&\leq  \int_B |z-x_0|\left|(n-\alpha)|x-\xi_z|^{-(n-\alpha+1)}\right||a(z)|\,dz\notag\\
&\leq (n-\alpha) r \cdot \lf(\frac{2^{j}r}{2}\r)^{-(n-\alpha+1)}
\int_B |a(z)|\,dz,\notag
\end{align}
where the last step utilized
$$|x-\xi_z|\geq|x-x_0|-|x_{0}-\xi_z|\geq |x-x_0|-\frac{|x-x_0|}{2}=\frac{|x-x_0|}{2}\geq \frac{2^{j}r}{2}.$$
In addition, by $\|a\|_{L^q(\rn)}=\|a\|_{L^{q}_{B}}|B|^{1-1/q'}$,  we get
\begin{align*}
\int_B |a(z)|\,dz
\leq |B|^{1/q'} \|a\|_{L^{q}}
=|B|\|a\|_{L^{q}_{B}},
\end{align*}
which and \eqref{eq.1.2} imply
\begin{align*}
|I_\alpha a(x)|
\ls r\cdot r^{n}\cdot r^{-(n-\alpha+1)}\cdot 2^{-j(n-\alpha+1)}\|a\|_{L^{q}_{B}}
= r^{\alpha}\cdot 2^{-j(n-\alpha+1)}\|a\|_{L^{q}_{B}}.
\end{align*}
This, along with lemma \ref{lemma2.3} and \cite[Lemma~3.4 and Lemma~5.5]{F-L2024}, yields
\begin{align*}
{\rm II}
&=\left(\int_{(2B)^c} \left|I_\alpha a(x)\left(b(x)-b_B\right)\right|^{p_\alpha}\,dx \right)^{\frac{1}{p_\alpha}}\\
&\leq\sum_{j=1}^{\infty}\left(\int_{(2^{j+1}B)\setminus(2^{j}B)}\left|I_\alpha a(x) \left(b(x)-b_B\right)\right|^{p_\alpha}\,dx\right)^{\frac{1}{p_\alpha}}\\
&\ls r^{\alpha}\|a\|_{L_B^{q}}\sum_{j=1}^{\infty}2^{-j(n-\alpha+1)}
\Bigg[
    \left(\int_{2^{j+1}B}\left|b(x)-b_{2^{j+1}B}\right|^{p_\alpha}\,dx \right)^{\frac{1}{p_\alpha}}\\
&\qquad +\left(\int_{2^{j+1}B}\left|b_B-b_{2^{j+1}B}\right|^{p_\alpha}\,dx \right)^{\frac{1}{p_\alpha}}\Bigg]\\
&\ls r^{\alpha}\|a\|_{L_B^{q}}\sum_{j=1}^{\infty}2^{-j(n-\alpha+1)}
\Big(|2^{j+1}B|^{\frac{1}{p_\alpha}-1}\|{\mathbf 1}_{2^{j+1}B}\|_{L^{\varphi}(\rn)}\|b\|_{\mathrm{BMO}_{\varphi}(\rn)}\\
&\qquad+|2^{j+1}B|^{\frac{1}{p_\alpha}}2^{\frac{(j+1)n}{\sigma}}|2^{j+1}B|^{-1}
\|\mathbf{1}_{B}\|_{L^{\varphi}(\mathbb{R}^n)} \|b\|_{\mathrm{BMO}_{\varphi}(\rn)}\Big)\\
&\ls r^{\alpha} \|a\|_{L^{q}_{B}}
\sum_{j=1}^{\infty} 2^{-j(n-\alpha+1)} |2^{j+1}B|^{\frac{1}{p_\alpha}-1}
2^{\frac{(j+1)n}{\sigma}} \|\mathbf{1}_B\|_{L^{\varphi}(\rn)} \|b\|_{\mathrm{BMO}_{\varphi}(\rn)}\\
&\approx r^{\alpha}r^{n\lf(\frac{1}{p_\alpha}-1\r)}\|a\|_{L^{q}_{B}}
\|\mathbf{1}_B\|_{L^{\varphi}(\rn)} \|b\|_{\mathrm{BMO}_{\varphi}(\rn)}\left(\sum_{j=1}^{\infty}
2^{-j(1+n-\alpha)+n(j+1)(\frac{1}{p_\alpha}-1)+\frac{(j+1)n}{\sigma}}\right).
\end{align*}
Further, by $\alpha=-n\lf(1/p_\alpha-1\r)$ and
$$-j(1+n-\alpha)+n(j+1)\left(\frac{1}{p_\alpha}-1\right)+\frac{(j+1)n}{\sigma}
=-j\lf(1-n\left(\frac{1}{\sigma} - 1\right)\r)+\frac{n}{\sigma}-\alpha,$$
we use \eqref{nsigma-1} to obtain that
\begin{align}\label{eq-II1}
\mathrm {II}\ls \mathit{\|a\|_{L^{q}_{B}}\|\mathbf{1}_B\|_{L^{\varphi}(\rn)} \|b\|_{\mathrm{BMO}_{\varphi}(\rn)}}\sum_{j=1}^{\infty}
2^{-j\lf(1-n\left(\frac{1}{\sigma} - 1\right)\r)}
\ls \mathit{\|a\|_{L^{q}_{B}}\|\mathbf{1}_B\|_{L^{\varphi}(\rn)} \|b\|_{\mathrm{BMO}_{\varphi}(\rn)}}.
\end{align}
Substituting \eqref{eq-I1} and \eqref{eq-II1} into \eqref{I1-II2} yields that $I_\alpha$ satisfies Definition \ref{def:class-K}(iii).
\end{proof}

\section{Decomposition of commutator}\label{s3}

This section is devoted to find a wavelet decomposition of commutator,
where the wavelet basis is introduced in \cite{Wavelet 1992}.

Denote $F=\{0,1\}^n\setminus\{(0,\dots,0)\}$, the function of all dyadic cubes in $\mathbb{R}^n$ is written as $\mathcal{D}$.  Let $\{\psi_I^\lambda\}_{I\in\mathcal{D},\lambda\in F}$ be a normalized and orthogonal wavelet basis of $L^2(\mathbb{R}^n)$ with compact support and sufficient smoothness (see \cite{Wavelet 1992}).

Any $g\in L^2(\mathbb{R}^n)$ can be expanded as
$$g=\sum_{I\in\mathcal{D}}\sum_{\lambda\in F}\langle g,\psi_I^\lambda\rangle\psi_I^\lambda \ \quad \text{for all} \ g\in L^2(\mathbb{R}^n).
$$

According to \cite{Bonami2012}, we define the bilinear operators $\Pi_i\ (i=1,2,3,4)$ by
\begin{align}\label{3.9}
\Pi_1(g_1,g_2)=\sum_{\substack{I,I'\in\mathcal{D}\\|I|=|I'|}}\sum_{\lambda\in F}\langle g_1,\varphi_I\rangle\langle g_2,\psi_{I'}^\lambda\rangle\varphi_I\psi_{I'}^\lambda,
\end{align}
\begin{align}\label{3.10}
\Pi_2(g_1,g_2)=\sum_{\substack{I,I'\in\mathcal{D}\\|I|=|I'|}}\sum_{\lambda\in F}\langle g_1,\psi_I^\lambda\rangle\langle g_2,\varphi_{I'}\rangle\psi_I^\lambda\varphi_{I'},
\end{align}
\begin{align}\label{3.11}
\Pi_3(g_1,g_2)=\sum_{\substack{I,I'\in\mathcal{D}\\|I|=|I'|}}\sum_{\substack{\lambda,\lambda'\in F\\(I,\lambda)\neq(I',\lambda')}}\langle g_1,\psi_I^\lambda\rangle\langle g_2,\psi_{I'}^{\lambda'}\rangle\psi_I^\lambda\psi_{I'}^{\lambda'}
\end{align}
and
\begin{align}\label{3.12}
\Pi_4(g_1,g_2)=\sum_{I\in\mathcal{D}}\sum_{\lambda\in F}\langle g_1,\psi_I^\lambda\rangle\langle g_2,\psi_I^\lambda\rangle(\psi_I^\lambda)^2.
\end{align}

\begin{proposition}\label{prop:3.14}
Suppose that $\varphi$ is a growth function, its lower and upper critical indices satisfy
\eqref{1.2}.
Respectively, each $\Pi_i$ ($i=1,2,3,4$) corresponds to the expression in \eqref{3.9}, \eqref{3.10}, \eqref{3.11} and \eqref{3.12}.
Consequently, the statements below are true.
\begin{enumerate}
\item[\rm (i)] Both $\Pi_1$ and $\Pi_3$ are bounded from
$H^{\varphi}(\mathbb{R}^n) \times \mathrm{BMO}_{\varphi}(\mathbb{R}^n)$ to
$H^1(\mathbb{R}^n)$;

\item[\rm (ii)] $\Pi_4$ is bounded from $H^{\varphi}(\mathbb{R}^n) \times \mathrm{BMO}_{\varphi}(\mathbb{R}^n)$ to $L^1(\mathbb{R}^n)$;

\item[\rm (iii)] Suppose that $I(\varphi)\in \left(n/(n+1),1\right)$ and \eqref{eq.1.5} holds for $\varphi$, a sequence of dyadic cubes are denoted by $R_j$. Given $b\in\mathrm{BMO}_\varphi$ and a finite atomic decomposition
$f=\sum_{j=1}^{M}a_j$ with $f\in H^{\varphi}(\rn)$, where each $a_j$ can be written as a constant times a $(\varphi,\infty,s)$ atom, and $\operatorname{supp}a_j\subset R_j$.
Then
$$\sum_{j=1}^{M} \big\| \Pi_2(a_j, b - b_{B_j}) \big\|_{H^1(\mathbb{R}^n)}
\;\lesssim\; \|f\|_{H^\varphi(\mathbb{R}^n)} \, \|b\|_{\mathrm{BMO}_\varphi(\mathbb{R}^n)},$$
where $b_{B_j}$ is the average of $b$ over ball $B_j$ and $B_j\supseteq R_j$ is the smallest ball.
\end{enumerate}
\end{proposition}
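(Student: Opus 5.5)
This proposition is the Orlicz analogue of the bilinear decomposition results of Bonami--Grellier and Ky, and I would follow their wavelet-based arguments. The common starting point is the characterization of the dual of $H^{\varphi}(\rn)$ as $\mathrm{BMO}_\varphi(\rn)$ (Ky \cite{Ky2014}), together with a wavelet (coefficient) characterization of $H^\varphi$. Concretely, one uses the Orlicz square function
\[
\Big\|\Big(\sum_{I,\lambda}|\langle g,\psi_I^\lambda\rangle|^2|I|^{-1}\mathbf 1_I\Big)^{1/2}\Big\|_{L^\varphi}\approx \|g\|_{H^\varphi},
\]
valid since $i(\varphi)>n/(n+1)$ and the wavelets are smooth and compactly supported. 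On the $\mathrm{BMO}_\varphi$ side, one needs a Carleson-type condition:
\[
\sum_{I\subset R}\sum_\lambda|\langle b,\psi_I^\lambda\rangle|^2\lesssim \|b\|^2_{\mathrm{BMO}_\varphi}\,\|\mathbf 1_R\|_{L^\varphi}^2/|R|
\]
for every dyadic cube $R$. This follows from the John--Nirenberg-type estimate \cite[Lemma~3.4]{F-L2024} with exponent $2$, applied to $(b-b_{\tilde R})\mathbf 1_{\tilde R}$ on a dilate $\tilde R$, using Lemma \ref{lemma2.3} to pass between $\tilde R$ and $R$.

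For (i), I would argue by duality with $\mathrm{BMO}$, testing $\Pi_1(g,b)$ against $h\in \mathrm{BMO}$, or more simply by checking that each summand has cancellation. Since $\int\varphi_I\psi_{I'}^\lambda=0$ when $|I|=|I'|$ (and $I\neq I'$ or by orthogonality), $\Pi_1(g,b)$ is a sum of molecules adapted to $I$. Its $H^1$ norm is therefore controlled by the square function of the coefficients $\langle g,\varphi_I\rangle\langle b,\psi_{I'}^\lambda\rangle$, with $I'$ within bounded distance of $I$. Atomically decomposing $g=\sum_j a_j$ into $(\varphi,\infty,s)$-atoms reduces matters to single atoms. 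For an atom $a$ supported in $R$, one bounds $\|\Pi_1(a,b)\|_{H^1}\lesssim |R|\,\|a\|_{L^\infty}\|b\|_{\mathrm{BMO}_\varphi}\|\mathbf 1_R\|_{L^\varphi}$. The cubes inside a dilate of $R$ are handled by the Carleson estimate, and the far cubes by the moment conditions of $a$ (here $s\ge m(\varphi)$ produces decay $2^{-k(s+1+n)}$ beating the growth $2^{kn/\sigma}$ from Lemma \ref{lemma2.3}). Since $\|a\|_{L^\infty}\le\|\mathbf 1_R\|^{-1}_{L^\varphi}$, each atom contributes $\lesssim\|b\|_{\mathrm{BMO}_\varphi}$ times its coefficient. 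Summing over atoms uses $\sum_j\lambda_j\lesssim\Lambda_\infty(\{\lambda_ja_j\})$, which holds because $\varphi$ is of upper type $1$ and $i(\varphi)\le1$, so the $\ell^1$ sum is dominated by the Orlicz atomic quasi-norm, i.e. by $\|g\|_{H^\varphi}$. $\Pi_3$ is treated identically; the off-diagonal condition $(I,\lambda)\ne(I',\lambda')$ again gives vanishing integrals. For (ii), $\Pi_4$ has no cancellation, but the $L^1$ bound follows from Cauchy--Schwarz on each atom: $\|\Pi_4(a,b)\|_{L^1}\le\sum_{I,\lambda}|\langle a,\psi_I^\lambda\rangle\langle b,\psi_I^\lambda\rangle|$. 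This is split into near cubes, where one uses $\ell^2$ of $a$-coefficients (bounded by $\|a\|_{L^2}$) times the Carleson sum of $b$, and far or large cubes, handled by moment decay of $\langle a,\psi_I^\lambda\rangle$.

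For (iii), the main obstacle, $\Pi_2(a_j,b-b_{B_j})$ involves $\langle b-b_{B_j},\varphi_{I'}\rangle$, which does not vanish for large cubes. So the estimate must exploit both the replacement $b\mapsto b-b_{B_j}$ and the moments of $a_j$. I would split the cubes $I$ into those with $|I|\le|R_j|$ meeting a dilate of $R_j$, and those with $|I|>|R_j|$. For small cubes, $\langle b-b_{B_j},\varphi_{I'}\rangle$ is bounded by averages of $|b-b_{B_j}|$ over cubes near $R_j$, which are $\lesssim \log$-type multiples of $\|b\|_{\mathrm{BMO}_\varphi}\|\mathbf 1_{R_j}\|_{L^\varphi}/|R_j|$ via Lemma \ref{lemma2.3}. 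Then $\Pi_2$ behaves like a paraproduct bounded on $H^1$, giving a contribution $\lesssim\|a_j\|_{L^\infty}|R_j|\cdot \|b\|\,\|\mathbf 1_{R_j}\|/|R_j|\cdot|R_j|$. For large cubes $|I|=2^{kn}|R_j|$, the moment condition gives $|\langle a_j,\psi_I^\lambda\rangle|\lesssim 2^{-k(s+1+n/2)}\ldots$, while $|\langle b-b_{B_j},\varphi_{I'}\rangle|$ grows like $2^{kn(1/\sigma-1/2)}$ by Lemma \ref{lemma2.3} with upper exponent $I(\varphi)$. Each such term is an $H^1$ molecule of size $|I|$. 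The geometric sum converges precisely when the exponent balance holds, and that is where \eqref{eq.1.5} (with $s=m(\varphi)$) enters. I expect the bookkeeping of this exponent, comparing $n/I(\varphi)$ growth against $s+1+n$ decay, to be the delicate point. Finally, one sums over $j$ as in (i).
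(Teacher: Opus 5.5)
The paper gives no argument of its own for this proposition. Parts (i) and (ii) are quoted from Propositions 3.14, 3.16 and 3.17 of [F-L2024], and (iii) from (5.17) there. Your atom-by-atom wavelet scheme is the natural Bonami--Grellier--Ky/Ky route to such estimates, and after repair it does work. Its ingredients are Carleson control of the $\mathrm{BMO}_\varphi$ coefficients near the atom, moment decay on large cubes, and $\sum_j\|a_j\|_{L^\infty}\|\mathbf 1_{B_j}\|_{L^\varphi}\lesssim\Lambda_\infty(\{a_j\})$ from upper type $1$. Several steps are wrong as written, however.

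\textbf{Single-atom bounds.} Your bounds in (i) and (iii) carry a spurious factor $|R|$. The correct bounds are $\|\Pi_1(a,b)\|_{H^1}\lesssim\|a\|_{L^\infty}\|\mathbf 1_R\|_{L^\varphi}\|b\|_{\mathrm{BMO}_\varphi}$ and $\|\Pi_2(a,b-b_B)\|_{H^1}\lesssim\|a\|_{L^\infty}\|\mathbf 1_R\|_{L^\varphi}\|b\|_{\mathrm{BMO}_\varphi}$; compare the case $\varphi(t)=t$. With your versions each atom would contribute $|R_j|\,\|b\|_{\mathrm{BMO}_\varphi}$, which cannot be summed.

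\textbf{Large cubes in (iii).} The growth of $\|\mathbf 1_{2^kB}\|_{L^\varphi}$ is bounded through a lower type $\sigma\in(n/(n+1),i(\varphi))$, via Lemma~\ref{lemma2.3} with $t\ge1$. An upper type only gives a lower bound here, so appealing to $I(\varphi)$ is invalid. With $\sigma$, each level-$k$ term is $\lesssim 2^{-k(s+1-n(1/\sigma-1))}\|a\|_{L^\infty}\|\mathbf 1_R\|_{L^\varphi}\|b\|_{\mathrm{BMO}_\varphi}$, which is summable since $n(1/\sigma-1)<1\le s+1$. So there is no delicate exponent balance, and \eqref{eq.1.5} is not what makes this converge. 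In fact, under \eqref{1.2} one has $m(\varphi)=0$ and $1/I(\varphi)>1$, so \eqref{eq.1.5} holds automatically.

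\textbf{Small cubes in (iii).} A logarithmically growing bound on $|I'|^{-1/2}|\langle b-b_B,\varphi_{I'}\rangle|$ does not let you invoke $H^1$-boundedness of a paraproduct, which needs a bounded symbol. There are two fixes.
\begin{enumerate}
\item Use $I(\varphi)<1$. An upper type $p_1<1$ makes the telescoping sum for $|b_{cI'}-b_B|$ geometric, by Lemma~\ref{lemma2.3} with $t\le1$. This gives the uniform bound $|\langle b-b_B,\varphi_{I'}\rangle|\lesssim |I'|^{1/2}|R|^{-1}\|\mathbf 1_R\|_{L^\varphi}\|b\|_{\mathrm{BMO}_\varphi}$.
\item Split $b-b_B=(b-b_B)\mathbf 1_{cB}+(b-b_B)\mathbf 1_{(cB)^c}$. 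Treat the first piece with $\Pi_2:L^2\times L^2\to H^1$ and $\|(b-b_B)\mathbf 1_{cB}\|_{L^2}\lesssim|B|^{-1/2}\|\mathbf 1_B\|_{L^\varphi}\|b\|_{\mathrm{BMO}_\varphi}$. The second piece only meets large cubes.
\end{enumerate}

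\textbf{Summation in (iii).} Your summation only gives $\Lambda_\infty(\{a_j\})\|b\|_{\mathrm{BMO}_\varphi}$ on the right. Replacing this by $\|f\|_{H^\varphi}\|b\|_{\mathrm{BMO}_\varphi}$ requires the decomposition to satisfy $\Lambda_\infty(\{a_j\})\lesssim\|f\|_{H^\varphi}$, as in \eqref{eq.1.7}. The statement leaves this implicit, and for an arbitrary finite decomposition the inequality is false: take $a_2=-a_1$, so that $f=0$. You should state this hypothesis explicitly.

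\textbf{Passing to general $g$ in (i)--(ii).} Prove the estimate for finite wavelet sums, which admit such controlled finite decompositions, and extend by density. Do not apply $\Pi_i$ termwise to an infinite atomic series.

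\textbf{A simplification.} With the wavelet-built atoms $a_k=\sum_{I\subset R_k}\sum_\lambda\langle g,\psi_I^\lambda\rangle\psi_I^\lambda$ used in Lemma~\ref{1.4}, one has $\langle a_k,\varphi_I\rangle=0$ for $|I|\ge|R_k|$. One also has $\langle a_k,\psi_I^\lambda\rangle=0$ unless $I\subset R_k$. So every sum is local, and your large-cube tail estimates are not needed at all for the decompositions actually used.
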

\begin{proof}
Note that (i) and (ii) follow from \cite[Propositions~3.14, 3.16 and 3.17]{F-L2024}.
By \cite[(5.17)]{F-L2024}, we obtain that (iii) holds.
\end{proof}


Next we will establish a decomposition for the commutator $[b,T]$. The proof relies on a wavelet orthonormal basis $\{\psi_l^\lambda\}$ (see \cite{Wavelet 1992}) and the associated bilinear operators $\Pi_i$ introduced in \cite{Bonami 2019}.

\begin{lemma}\label{1.4}
Given a growth function $\varphi$ and its lower and upper critical indices satisfy \eqref{1.2} and \eqref{eq.1.5}.
Let $\Pi_{4}$ be defined as \eqref{3.12}
and $T\in \mathcal{K}_{\varphi,\,\alpha}$ with $0<\alpha<n$.
For any $b\in\mathrm{BMO}_{\varphi}(\mathbb{R}^n)$ and any $g\in H^{\varphi}(\mathbb{R}^n)$, there exists an operator
$$\mathfrak{T}:H^{\varphi}(\rn)\times\mathrm{BMO}_{\varphi}(\rn)\to L^{\frac{n}{n-\alpha}}(\rn)$$
such that
\begin{align}\label{eq-bTI}
[b,T]g= \mathfrak{T}(g,\,b)-T(\Pi_{4}(g,\,b)).
\end{align}
\end{lemma}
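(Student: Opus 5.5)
The plan is to follow the bilinear splitting of Bonami--Grellier and Ky, adapted to the Orlicz setting through Proposition \ref{prop:3.14}. The starting point is the pointwise product decomposition
$$ bg=\Pi_1(g,b)+\Pi_2(g,b)+\Pi_3(g,b)+\Pi_4(g,b), $$
which is first checked for $g$ a finite linear combination of $(\varphi,\infty,s)$-atoms (so $g\in L^2$ with compact support) and $b$ locally in $L^2$. This identity comes from expanding both $g$ and $b$ in the wavelet basis. One writes the product of the two expansions as a telescoping sum over scales, $P_{j+1}g\,P_{j+1}b-P_jg\,P_jb$, where $P_j$ is the projection onto the scaling space $V_j$ spanned by $\varphi_I$ with $|I|=2^{-jn}$. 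Then one regroups the terms into the four pieces \eqref{3.9}--\eqref{3.12}. For such $g$, I would set
$$ \mathfrak T(g,b):=bTg-T\big(\Pi_1(g,b)+\Pi_2(g,b)+\Pi_3(g,b)\big). $$
With this choice \eqref{eq-bTI} is an algebraic tautology, because $T(bg)=T(\sum_i\Pi_i(g,b))$ by linearity of $T$ on the relevant sums.

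The substance of the lemma is that $\mathfrak T$ maps $H^\varphi\times\mathrm{BMO}_\varphi$ into $L^{\frac{n}{n-\alpha}}$ with the bound $\|\mathfrak T(g,b)\|_{L^{n/(n-\alpha)}}\lesssim\|g\|_{H^\varphi}\|b\|_{\mathrm{BMO}_\varphi}$. Given that bound on a dense class, $\mathfrak T$ extends by density, using the finite atomic decompositions of \cite{Ky2014}. The terms $\Pi_1$ and $\Pi_3$ are handled directly. By Proposition \ref{prop:3.14}(i) they lie in $H^1$ with norm $\lesssim\|g\|_{H^\varphi}\|b\|_{\mathrm{BMO}_\varphi}$, and $T$ is bounded $H^1\to L^{n/(n-\alpha)}$ by Definition \ref{def:class-K}(i).

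The term $\Pi_2$ is paired with $bTg$. For $g=\sum_{j=1}^M a_j$, I would use bilinearity to write
$$ bTg-T\Pi_2(g,b)=\sum_{j}\Big[(b-b_{B_j})Ta_j-T\big(\Pi_2(a_j,b-b_{B_j})\big)\Big], $$
valid because $\Pi_2(a_j,c)=a_jc$ for constants $c$ (only the $\psi$-coefficients of $a_j$ and the scaling coefficients of $b$ enter, and $\sum\langle c,\varphi_{I'}\rangle\varphi_{I'}=c$ at each scale). The second summand is controlled by Definition \ref{def:class-K}(i) together with Proposition \ref{prop:3.14}(iii). For the first summand, note that $a_j$ is a multiple of a $(\varphi,q,0)$-atom, so Definition \ref{def:class-K}(iii) gives
$$ \|(b-b_{B_j})Ta_j\|_{L^{n/(n-\alpha)}}\lesssim \|a_j\|_{L^q_{B_j}}\|\mathbf 1_{B_j}\|_{L^\varphi}\|b\|_{\mathrm{BMO}_\varphi}. $$

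The main obstacle is summing these last bounds over $j$. This requires $\sum_j\|a_j\|_{L^\infty}\|\mathbf 1_{B_j}\|_{L^\varphi}\lesssim\|g\|_{H^\varphi}$, which is not automatic for Orlicz quasi-norms with $i(\varphi)<1$. I would first try the estimate $\sum_j\lambda_j\|\mathbf 1_{B_j}\|_{L^\varphi}\lesssim\Lambda_\infty(\{a_j\})$. This holds because $\varphi$ is of upper type $1$ and has lower type $>n/(n+1)$, which makes $t\mapsto\varphi(t)/t$ essentially non-increasing, giving $\sum_j\lambda_j\|\mathbf 1_{B_j}\|_{L^\varphi}\lesssim\Lambda$ by a convexity-type argument in the spirit of \cite[Lemma 5.5]{F-L2024}. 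One then takes the infimum over decompositions with $\Lambda_\infty\lesssim\|g\|_{H^\varphi}$. If this direct approach fails, I would fall back on citing the analogous estimate already established in \cite[Section 5]{F-L2024} under \eqref{eq.1.5}, since it is exactly the same ingredient used to prove Proposition \ref{prop:3.14}(iii).
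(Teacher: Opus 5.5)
Your overall scheme matches the paper's. Your $\mathfrak T(g,b)=bTg-T\bigl((\Pi_1+\Pi_2+\Pi_3)(g,b)\bigr)$ coincides with the paper's $\mathcal W(g,b)-T(\Pi_1(g,b))-T(\Pi_3(g,b))$ once $bTg-T(\Pi_2(g,b))$ is rewritten, as you do, as $\sum_j[(b-b_{B_j})Ta_j-T(\Pi_2(a_j,b-b_{B_j}))]$. The bound also uses the same ingredients: Proposition~\ref{prop:3.14}(i),(iii), Definition~\ref{def:class-K}(i),(iii) and \eqref{eq.1.7}.

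The step that does not go through as written is your choice of dense class. Take $g$ an arbitrary finite linear combination of $(\varphi,\infty,s)$-atoms. Then ``taking the infimum over decompositions'' only produces the finite $L^\infty$-atomic quasi-norm of $g$. Its domination by $\|g\|_{H^\varphi(\rn)}$ is established in \cite{Ky2014} for $q=\infty$ only on continuous functions; as in the Meda--Sj\"ogren--Vallarino theory, continuity is needed at $q=\infty$. So you do not actually have a finite decomposition with $\Lambda_\infty(\{a_j\})\ls\|g\|_{H^\varphi(\rn)}$.

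Moreover, Proposition~\ref{prop:3.14}(iii) is not a per-atom estimate into which arbitrary atoms can be fed. It concerns atoms supported in dyadic cubes $R_j$, and it must be read for a specific decomposition with controlled $\Lambda_\infty$; read literally for any decomposition, $0=a+(-a)$ would contradict it. Atoms supported in balls do not fit either: a ball centred at the origin lies in no dyadic cube, and cutting an atom along dyadic boundaries destroys its vanishing moments.

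The paper avoids all of this by taking $g$ a finite linear combination of wavelets. Such $g$ is smooth, and the algebraic product identity needs no coarse-scale limit, since $P_jg=0$ for $j$ small. Moreover, \cite[Lemma~3.3]{F-L2024} supplies $g=\sum_{k=1}^M a_k$ with $a_k=\sum_{I\subset R_k}\sum_{\lambda\in F}\langle g,\psi_I^\lambda\rangle\psi_I^\lambda$, $R_k$ dyadic and $\Lambda_\infty(\{a_k\})\ls\|g\|_{H^\varphi(\rn)}$. This is exactly the decomposition for which Proposition~\ref{prop:3.14}(iii) is available. With that dense class substituted, the rest of your argument goes through.

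Finally, your summation estimate $\sum_j\lambda_j\|\mathbf 1_{B_j}\|_{L^\varphi(\rn)}\ls\Lambda_\infty(\{a_j\})$ is correct; it is \cite[Proposition~2.5]{F-L2024}. However, it uses only upper type $1$, a concavity-type property, not the lower type and not convexity. Normalize $\Lambda_\infty=1$ and put $t_j:=\varphi^{-1}(1/|B_j|)$, with $\varphi$ continuous and strictly increasing as one may assume. Then $\|\mathbf 1_{B_j}\|_{L^\varphi(\rn)}=1/t_j$ and $\varphi(\lambda_j)\le\varphi(t_j)$, hence $\lambda_j\le t_j$. Upper type $1$ gives $\varphi(t_j)\le C(t_j/\lambda_j)\varphi(\lambda_j)$, i.e.\ $\lambda_j/t_j\le C\varphi(\lambda_j)|B_j|$, and summing over $j$ yields the bound.
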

\begin{proof}
Based on the argument of  \cite[Proposition~6.7]{Wavelet 2018},
the following three steps will be used to prove the desired result.

\medskip

{\it Step 1: Verifying \eqref{eq-bTI} under the case that $g$ has a finite linear combination of wavelets.}
We unitize \cite[Lemma~3.3, (c) and (a)]{F-L2024} to obtain
that $g$ has finite atomic decomposition
$$g=\sum^{M}_{k=1}a_{k}$$
with
$$a_k=\sum_{\substack{I\in\mathcal{D}\\ I\subset R_k}}\sum_{\lambda\in F}
\langle g,\psi^{\lambda}_{I}\rangle\psi^{\lambda}_{I}\qquad (k=1,\dots,M)$$
and
$$\Lambda_{\infty}\bigl(\{a_k\}_{k=1}^M\bigr)\lesssim\|g\|_{H^{\varphi}(\mathbb{R}^n)},$$
where $\operatorname{supp} a_k \subseteq R_k$
and $\langle g,\psi^{\lambda}_{I}\rangle$ is the coefficients of $a_k$.
Further, let $B_k$ be the smallest ball containing dyadic cube $R_k$.
According to \cite[Proposition~2.5]{F-L2024}, we have
\begin{equation}\label{eq.1.7}
\sum_{k=1}^{M}\|a_k\|_{L^{\infty}_{B_k}}\|\mathbf{1}_{B_k}\|_{L^{\varphi}(\mathbb{R}^n)}
\lesssim\Lambda_{\infty}\bigl(\{a_k\}_{k=1}^M\bigr)
\lesssim\|g\|_{H^{\varphi}(\mathbb{R}^n)}.
\end{equation}

Denote
$$\mathcal{W}(g,\,b):=T\left(\sum_{k=1}^{M}\left[(b-b_{B_k})a_k
-\Pi_{2}\left(a_k,b-b_{B_k}\right)\right]\right).$$
For almost every $x\in\mathbb{R}^n$, using a standard algebraic identity (see \cite[(6.9)]{Wavelet 2018}), it holds
\begin{align*}
b(x)g(\cdot)-b(\cdot)g(\cdot)=&\sum_{k=1}^{M}\Bigl[\bigl(b(x)-b_{B_k}\bigr)a_k(\cdot)
-\Pi_{2}\bigl(a_k,b-b_{B_k}\bigr)(\cdot)\Bigr]\\ \notag
&\quad-\Pi_{1}(g,b)(\cdot)-\Pi_{3}(g,b)(\cdot)-\Pi_{4}(g,b)(\cdot).
\end{align*}
This implies
\begin{align}\label{eq-3.13}
[b,T]g= \mathcal{W}(g,\,b)-T(\Pi_1(g,\,b))-T(\Pi_3(g,\,b))-T(\Pi_4(g,\,b)).
\end{align}
Now, if we define
$$\mathfrak{T}(g,\,b)= \mathcal{W}(g,\,b)-T(\Pi_1(g,\,b))-T(\Pi_3(g,\,b)),$$
then immediately obtain
$$[b,T]g=\mathfrak{T}(g,\,b)-T(\Pi_4(g,\,b)),$$
which is the desired equality.

\medskip
{\it Step 2:
For $b\in\mathrm{BMO}_{\varphi}(\rn)$ and $g\in H^{\varphi}(\rn)$ with finite linear combination of wavelets, it follows that
$$\|\mathfrak{T}(g,\,b)\|_{L^{\frac{n}{n-\alpha}}(\rn)}\ls \|g\|_{H^{\varphi}(\rn)}\|b\|_{\mathrm{BMO}_{\varphi}(\rn)}.$$}
Using Proposition \ref{prop:3.14},
 for arbitrary $g\in H^{\varphi}(\rn)$ and $b\in\mathrm{BMO}_{\varphi}(\rn)$,
$$\|\Pi_{i}(g,\,b)\|_{H^1(\rn)}\ls \|g\|_{H^{\varphi}(\rn)}\|b\|_{\mathrm{BMO}_{\varphi}(\rn)},\quad i=1, 3.$$
By this, $T\in \mathcal{K}_{\varphi,\,\alpha}$, Proposition \ref{prop:3.14}(iii) , \eqref{eq.1.7} and $\|a_k\|_{L^{q}_{B_k}}\leq\|a_k\|_{L^{\infty}_{B_k}}$ with $q\in(1, n/\alpha)$, we see that
\begin{align*}
\|\mathfrak{T}(g,b)\|_{L^{\frac{n}{n-\alpha}}(\mathbb{R}^n)}
&\le \|\mathcal{U}(g,b)\|_{L^{\frac{n}{n-\alpha}}(\mathbb{R}^n)}+\|T(\Pi_{1}(g,b))\|_{L^{\frac{n}{n-\alpha}}(\mathbb{R}^n)}
+\|T(\Pi_{3}(g,b))\|_{L^{\frac{n}{n-\alpha}}(\mathbb{R}^n)}\\
&\lesssim \sum_{k=1}^{M}\lf\|(b(\cdot)-b_{B_k})Ta_k\r\|_{L^{\frac{n}{n-\alpha}}(\mathbb{R}^n)}
+\lf\|T\lf(\sum_{k=1}^{M}\Pi_{2}(a_k,b-b_{B_k})\r)\r\|_{L^{\frac{n}{n-\alpha}}(\mathbb{R}^n)}\\
&\qquad +\|\Pi_{1}(g,b)\|_{H^{1}(\mathbb{R}^n)}+\|\Pi_{3}(g,b)\|_{H^{1}(\mathbb{R}^n)}\\
&\lesssim \|g\|_{H^{\varphi}(\mathbb{R}^n)}\|b\|_{\mathrm{BMO}_{\varphi}(\mathbb{R}^n)}
+\sum_{k=1}^{M}\|a_k\|_{L^{q}_{B_k}}\|\mathbf{1}_{B_k}\|_{L^{\varphi}(\mathbb{R}^n)}
\|b\|_{\mathrm{BMO}_{\varphi}(\mathbb{R}^n)}\\
&\qquad+\sum_{k=1}^{M}\lf\|\Pi_{2}(a_k,b-b_{B_k})\r\|_{H^{1}(\mathbb{R}^n)}\\
&\lesssim \|g\|_{H^{\varphi}(\mathbb{R}^n)}\|b\|_{\mathrm{BMO}_{\varphi}(\mathbb{R}^n)},
\end{align*}
as desired.

\medskip

{\it Step 3: The proof of \eqref{eq-bTI}.}
Based on the facts obtained in {\it Steps 1} and {\it 2}, we repeat the argument of \cite[(5.18)]{F-L2024} to obtain \eqref{eq-bTI}.
This implies Lemma \ref{1.4}.
\end{proof}

%

\section{Proofs of Theorems \ref{Theo-main-b-1} and \ref{Theo-main-2}}\label{s4}

The primary objective of this section is to complete the proofs of Theorems \ref{Theo-main-b-1} and \ref{Theo-main-2} by using Lemma \ref{Theorem.11}.
We first recall the following lemma from \cite[Theorem~5.10]{F-L2024}.

\begin{lemma}\label{Theorem.11}
Suppose that $\varphi$ is growth function and it satisfies \eqref{1.2} and \eqref{eq.1.5}.
If $b\in\mathrm{BMO}_{\varphi}(\rn)$, then
$g\in H^{\varphi}_{b}(\mathbb{R}^{n})$ is equivalent to $\Pi_{4}(g,\, b)\in H^{1}(\mathbb{R}^{n})$ with
\begin{align*}
\|g\|_{H^{\varphi}_{b}(\mathbb{R}^{n})}
\approx\|g\|_{H^{\varphi}(\mathbb{R}^{n})}\|b\|_{\mathrm{BMO}_{\varphi}(\rn)}+\left\|\Pi_{4}(g,\, b)\right\|_{H^{1}(\mathbb{R}^{n})}.
\end{align*}
\end{lemma}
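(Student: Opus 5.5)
The plan is to run the argument of Lemma~\ref{1.4} again, with the sublinear operator $T$ replaced by the grand maximal function $\mathfrak{M}$. The goal is the two-sided pointwise estimate
\begin{align*}
\bigl|[b,\mathfrak{M}]g(x)-\mathfrak{M}(\Pi_4(g,b))(x)\bigr|\le E_{g,b}(x),
\qquad \|E_{g,b}\|_{L^1(\rn)}\ls \|g\|_{H^\varphi(\rn)}\|b\|_{\mathrm{BMO}_\varphi(\rn)}.
\end{align*}
From this both directions of the equivalence follow at once. Indeed, $\Pi_4(g,b)\in L^1(\rn)$ by Proposition~\ref{prop:3.14}(ii), so it is a tempered distribution, and $\|\Pi_4(g,b)\|_{H^1}=\|\mathfrak{M}\Pi_4(g,b)\|_{L^1}$. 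Hence $[b,\mathfrak{M}]g\in L^1$ if and only if $\Pi_4(g,b)\in H^1$, and the two quantities differ by at most $C\|g\|_{H^\varphi}\|b\|_{\mathrm{BMO}_\varphi}$. This gives the stated norm equivalence.

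First I take $g$ to be a finite linear combination of wavelets, decomposed as $g=\sum_{k=1}^M a_k$ exactly as in Step~1 of Lemma~\ref{1.4}, with \eqref{eq.1.7} available. Fix $x$. The algebraic identity \cite[(6.9)]{Wavelet 2018} writes $(b(x)-b(\cdot))g(\cdot)$ as
\begin{align*}
\sum_{k}\bigl[(b(x)-b_{B_k})a_k-\Pi_2(a_k,b-b_{B_k})\bigr]-\Pi_1(g,b)-\Pi_3(g,b)-\Pi_4(g,b).
\end{align*}
Apply $\mathfrak{M}$, evaluate at $x$, and use sublinearity to get the bound $E_{g,b}(x)$ as the sum of the following terms:
\begin{align*}
\sum_k |b(x)-b_{B_k}|\,\mathfrak{M}a_k(x),\quad
\mathfrak{M}\Bigl(\sum_k\Pi_2(a_k,b-b_{B_k})\Bigr)(x),\quad
\mathfrak{M}\Pi_1(g,b)(x),\quad
\mathfrak{M}\Pi_3(g,b)(x).
\end{align*}
The last three terms lie in $L^1$ with the required bound by Proposition~\ref{prop:3.14}(i) and (iii), since the $L^1$ norm of $\mathfrak{M}$ is the $H^1$ norm.

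For the first term I need an $\alpha=0$ analogue of Definition~\ref{def:class-K}(iii), namely
\begin{align*}
\|(b-b_B)\mathfrak{M}a\|_{L^1(\rn)}\ls \|a\|_{L^\infty_B}\|\mathbf{1}_B\|_{L^\varphi(\rn)}\|b\|_{\mathrm{BMO}_\varphi(\rn)}
\end{align*}
for a $(\varphi,\infty,s)$-atom $a$. I would prove it the same way as Lemma~\ref{lem1.2}, splitting into $2B$ and $(2B)^c$.
\begin{itemize}
\item On $2B$: apply H\"older's inequality, the $L^2$-boundedness of $\mathfrak{M}$, and the $L^r$ oscillation estimates for $\mathrm{BMO}_\varphi$ from \cite[Lemmas 3.4, 5.5]{F-L2024}, together with Lemma~\ref{lemma2.3}.
\item On $2^{j+1}B\setminus 2^jB$: Taylor-expand the test function to order $s$ and use the vanishing moments. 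This gives the standard decay $\mathfrak{M}a(x)\ls \|a\|_{L^\infty_B}(r/|x-x_0|)^{n+s+1}$. The growth $|b_B-b_{2^{j+1}B}|\ls 2^{jn/\sigma}|2^{j+1}B|^{-1}\|\mathbf{1}_B\|_{L^\varphi}\|b\|$ then yields a series in $j$ that converges because $s+1>n(1/\sigma-1)$. This holds since $s\ge m(\varphi)$ and $\sigma<i(\varphi)$ can be taken close to $i(\varphi)$.
\end{itemize}
Summing over $k$ and using \eqref{eq.1.7} bounds the first term.

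The main obstacle is the passage from finite wavelet sums to a general $g\in H^\varphi(\rn)$. My first attempt is the following. Approximate $g$ in $H^\varphi$ by finite wavelet sums $g_N$. Then $\Pi_4(g_N,b)\to\Pi_4(g,b)$ in $L^1$ by Proposition~\ref{prop:3.14}(ii), and passing to a subsequence gives a.e.\ convergence. For the maximal functions I would use Fatou's lemma for $\mathfrak{M}$: lower semicontinuity of $\mathfrak{M}$ under distributional convergence, applied to both $[b,\mathfrak{M}]g_N$ and $\mathfrak{M}\Pi_4(g_N,b)$. The delicate point is convergence of $(b(x)-b)g_N\to(b(x)-b)g$ in $\mathcal{S}'$ for a.e.\ fixed $x$. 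I would handle it as in \cite[(5.18)]{F-L2024}, pairing $g_N$ against $(b(x)-b)\psi_t(y-\cdot)$, which lies in the dual $\mathrm{BMO}_\varphi$-type space uniformly in $N$. This yields the pointwise estimate in the limit and hence the lemma.
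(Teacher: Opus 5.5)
The paper does not prove this lemma; it quotes it from \cite[Theorem~5.10]{F-L2024}. Your route is the natural one: Ky's subbilinear decomposition applied with $T=\mathfrak{M}$. The finite-wavelet-sum part is sound, apart from one small correction. $\mathcal{S}_0$ as defined here controls only derivatives of order at most $1$. So off $2B$ the Taylor argument gives only $\mathfrak{M}a(x)\ls\|a\|_{L^\infty_B}(r/|x-x_0|)^{n+1}$, not the exponent $n+s+1$. This is still enough: the resulting series $\sum_j 2^{-j(n+1)}2^{jn/\sigma}$ converges by \eqref{nsigma-1}.

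\textbf{The gap is in the density step.} The Fatou property of $\mathfrak{M}$ gives only $\liminf$ inequalities:
$[b,\mathfrak{M}]g(x)\le\liminf_N[b,\mathfrak{M}]g_N(x)$ and $\mathfrak{M}\Pi_4(g,b)(x)\le\liminf_N\mathfrak{M}\Pi_4(g_N,b)(x)$.
Such inequalities cannot carry a two-sided bound $|A_N-B_N|\le E_N$ to the limit.
\begin{itemize}
\item To get $\mathfrak{M}\Pi_4(g,b)\le[b,\mathfrak{M}]g+E$ from $\mathfrak{M}\Pi_4(g_N,b)\le[b,\mathfrak{M}]g_N+E_{g_N,b}$, you need an \emph{upper} bound $\limsup_N[b,\mathfrak{M}]g_N(x)\le[b,\mathfrak{M}]g(x)$ up to an integrable error. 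That means controlling $[b,\mathfrak{M}](g_N-g)(x)$. But $g_N-g$ is not a finite wavelet sum, so this is exactly the estimate you have not proved. The implication $[b,\mathfrak{M}]g\in L^1\Rightarrow\Pi_4(g,b)\in H^1$, with its norm bound, is therefore left open.
\item The converse implication also needs an upper bound: $\limsup_N\mathfrak{M}\Pi_4(g_N,b)\le\mathfrak{M}\Pi_4(g,b)$ a.e.\ along a subsequence. This one does hold, from $\mathfrak{M}\Pi_4(g_N,b)\le\mathfrak{M}\Pi_4(g,b)+\mathfrak{M}\Pi_4(g_N-g,b)$ and the weak type $(1,1)$ of $\mathfrak{M}$. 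It does not come from Fatou, though.
\end{itemize}

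\textbf{The fix} is to pass the \emph{identity}, not the inequality, to the limit.
\begin{itemize}
\item Let $g=\sum_k a_k$ be the full wavelet atomic decomposition and $g_K=\sum_{k\le K}a_k$. Then $\|g_K\|_{H^\varphi(\rn)}\ls\|g\|_{H^\varphi(\rn)}$ uniformly in $K$.
\item For a.e.\ $x$, your duality pairing gives $(b(x)-b)g_K\to(b(x)-b)g$ in $\mathcal{S}'$. Proposition~\ref{prop:3.14} gives $\Pi_i(g_K,b)\to\Pi_i(g,b)$. Hence, in $\mathcal{S}'$,
\[
(b(x)-b)g+\Pi_4(g,b)=\lim_{K\to\infty}\sum_{k\le K}\bigl[(b(x)-b_{B_k})a_k-\Pi_2(a_k,b-b_{B_k})\bigr]-\Pi_1(g,b)-\Pi_3(g,b).
\]
\item Use $|\mathfrak{M}u-\mathfrak{M}v|\le\mathfrak{M}(u+v)$ with $u=(b(x)-b)g$ and $v=\Pi_4(g,b)$. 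Apply Fatou once, to the partial sums. This gives
$|[b,\mathfrak{M}]g(x)-\mathfrak{M}\Pi_4(g,b)(x)|\le\sum_k|b(x)-b_{B_k}|\mathfrak{M}a_k(x)+\sum_k\mathfrak{M}\Pi_2(a_k,b-b_{B_k})(x)+\mathfrak{M}\Pi_1(g,b)(x)+\mathfrak{M}\Pi_3(g,b)(x)$.
\item The right-hand side has $L^1$ norm $\ls\|g\|_{H^\varphi(\rn)}\|b\|_{\mathrm{BMO}_\varphi(\rn)}$. This follows from your atom estimate, \eqref{eq.1.7}, Proposition~\ref{prop:3.14}(i) and (iii) applied to $g_K$, and monotone convergence.
\end{itemize}
Both directions of the lemma then follow as you describe.
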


\begin{proof}[\bf{Proof of Theorem \ref{Theo-main-b-1}}]
Under the assumptions that $b\in\mathrm{BMO}_{\varphi}(\rn)$ and $g\in H^{\varphi}_{b}(\mathbb{R}^{n})$,  by Lemma \ref{Theorem.11},
we have $\Pi_{4}(g,\, b)\in H^{1}(\mathbb{R}^{n})$ and
\begin{align}\label{eq-fh3-4}
\|g\|_{H^{\varphi}_{b}(\mathbb{R}^{n})}
\approx\|g\|_{H^{\varphi}(\mathbb{R}^{n})}
\|b\|_{\mathrm{BMO}_{\varphi}(\rn)}+\|\Pi_{4}(g,\, b)\|_{H^{1}(\mathbb{R}^{n})}.
\end{align}
Based on Lemma \ref{lem1.2}, we use Lemma \ref{1.4} to obtain that there exists operator
$$\mathfrak{T}: H^{\varphi}(\mathbb{R}^{n})\times\mathrm{BMO}_{\varphi}(\rn)\to L^{\frac{n}{n-\alpha}}(\rn)$$
such that
$$[b, I_\alpha]g=\mathfrak{T}(g,\, b)-{I_\alpha}(\Pi_{4}(g,\, b)).$$
From this and \eqref{eq-H-1}, it follows that
\begin{align*}
\|[b, I_\alpha]g\|_{L^{\frac{n}{n-\alpha}}(\rn)}
&=\|\mathfrak{T}(g,\, b)-I_\alpha(\Pi_{4}(g,\, b))\|_{L^{\frac{n}{n-\alpha}}(\rn)}\\
&\leq \|\mathfrak{T}(g,\, b)\|_{L^{\frac{n}{n-\alpha}}(\rn)}+\|I_\alpha(\Pi_{4}(g,\, b))\|_{L^{\frac{n}{n-\alpha}}(\rn)}\\
&\ls\|g\|_{H^{\varphi}(\mathbb{R}^{n})}\|b\|_{\mathrm{BMO}_{\varphi}(\rn)}+\|\Pi_{4}(g,\, b)\|_{H^{1}(\mathbb{R}^{n})}\\
&\approx \|g\|_{H^{\varphi}_{b}(\mathbb{R}^{n})},
\end{align*}
which proves Theorem\ref{Theo-main-b-1}.
\end{proof}

\begin{proof}[\bf{Proof of Theorem \ref{Theo-main-2}}]
By Lemma \ref{1.4}, we find that there exists  operator
$$\mathfrak{T}: H^{\varphi}(\mathbb{R}^{n})\times\mathrm{BMO}_{\varphi}(\rn)\to L^{\frac{n}{n-\alpha}}(\rn)$$
such that for any $b\in\mathrm{BMO}_{\varphi}(\rn)$ and $g\in H^{\varphi}(\mathbb{R}^{n})$,
$$[b, I_\alpha]g(y)=\mathfrak{T}(g,\, b)(y)-{I_\alpha}(\Pi_{4}(g,\, b))(y)
\ \quad\text{for a. e.} \, y\in\rn.$$
So, using \eqref{eq-Ln-L1} and Proposition \ref{prop:3.14}, we have
\begin{align*}
\|[b,I_\alpha]g\|_{L^{\frac{n}{n-\alpha}, \, {\infty}}(\mathbb{R}^n)}
&\ls\|\mathfrak{T}(g,\, b)\|_{L^{\frac{n}{n-\alpha}, \, {\infty}}(\mathbb{R}^n)}+
\|I_\alpha(\Pi_{4}(g,\,b))\|_{L^{\frac{n}{n-\alpha}, \, {\infty}}(\mathbb{R}^n)}\\
&\ls\|\mathfrak{T}(g,\,b)\|_{L^{\frac{n}{n-\alpha}}(\mathbb{R}^n)}+\|\Pi_{4}(g,\,b)\|_{L^{1}(\rn)}\\
&\ls\|g\|_{H^{\varphi}(\mathbb{R}^{n})}\|b\|_{\mathrm{BMO}_{\varphi}(\mathbb{R}^n)}.
\end{align*}
This implies Theorem\ref{Theo-main-b-1}.
\end{proof}

\end{document}